\newcommand{\ca}{{\cal A}}
\newcommand{\cp}{{\cal P}}
\newcommand{\cs}{{\cal S}}
\newcommand{\ct}{{\cal T}}
\newcommand{\bsa}{\boldsymbol{a}}
\newcommand{\bsb}{\boldsymbol{b}}
\newcommand{\bsc}{\boldsymbol{c}}
\newcommand{\bsm}{\boldsymbol{m}}
\newcommand{\bsx}{\boldsymbol{x}}
\newcommand{\bsy}{\boldsymbol{y}}
\newcommand{\bsz}{\boldsymbol{z}}
\newcommand{\bsU}{\boldsymbol{U}}
\newcommand{\ints}{\mathbb{Z}}
\newcommand{\real}{\mathbb{R}}
\newcommand{\tran}{\mathsf{T}} 
\newcommand{\phm}{\phantom{-}}
\newcommand{\rd}{\mathrm{\, d}}
\newcommand{\one}{{\mathbf{1}}}
\newcommand{\vol}{{\mathbf{vol}}}
\renewcommand{\emptyset}{\varnothing}
\renewcommand{\ge}{\geqslant}
\renewcommand{\le}{\leqslant}
\newcommand{\dustd}{\mathbf{U}} 
\newcommand{\e}{{\mathbb{E}}} 
\newcommand{\hk}{{\mathrm{HK}}}
  \newcommand{\BIT}{\begin{itemize}}
\newcommand{\EIT}{\end{itemize}}
\newcommand{\BNUM}{\begin{enumerate}}
\newcommand{\ENUM}{\end{enumerate}}
\theoremstyle{definition}
\newtheorem{definition}{Definition}
\theoremstyle{plain}
\newtheorem{theorem}{Theorem}
\newtheorem{lemma}{Lemma}
\theoremstyle{definition}
\newcommand{\calt}{\mathcal{T}}
\newcommand{\pr}{{\mathrm{P}}}
\newcommand{\pc}{{\mathrm{PC}}}
\newcommand{\etri}{{\Delta_E}}
\newcommand{\ptA}{A}
\newcommand{\ptB}{B}
\newcommand{\ptC}{C}
\date{March 2014}
\author{Kinjal Basu\\Stanford University \and Art B. Owen\\Stanford University}
\title{Low discrepancy constructions in the triangle}
\begin{document}
\maketitle
\begin{abstract}
Most quasi-Monte Carlo research focuses on sampling from the unit cube. Many problems, especially in computer graphics, are defined via quadrature over the unit triangle. 
Quasi-Monte Carlo methods for the triangle have been developed by \cite{pill:cool:2005} and by \cite{bran:colz:giga:trav:2013}.
This paper presents two QMC constructions in the triangle with a vanishing
discrepancy.  The first is a version of the van der Corput sequence customized to the
unit triangle.  It is an extensible digital construction that attains a discrepancy below $12/{\sqrt{N}}$. 
The second construction rotates an integer lattice
through an angle whose tangent is
a quadratic irrational number.
It attains a discrepancy of $O(\log(N)/N)$ which is the
best possible rate. Previous work strongly indicated that
such a discrepancy was possible, but no constructions were available.
Scrambling the digits of the first construction improves its accuracy
for integration of smooth functions.
Both constructions also yield convergent estimates for integrands that are Riemann integrable on the triangle without requiring bounded variation.
\\

\par\noindent
\textbf{Keywords} : Quasi-Monte Carlo Method; Discrepancy on Triangle; Quadrature error.
\end{abstract}

\section{Introduction}

The problem we consider here is numerical integration
over a triangular domain, using quasi-Monte Carlo (QMC) sampling.
Such integrals commonly arise in graphical rendering.
Classical quadrature methods find a set of points 
$\bsx_1,\dots,\bsx_N$ in the triangle
and weights $w_i\in \real$ so that
$\sum_{i=1}^Nw_if(\bsx_i)$ correctly integrates
a class of polynomials $f$.
\cite{lyne:cool:1994} give a survey.

Classical rules often do poorly on non-smooth
integrands. It is also difficult to estimate
error for them and there is little freedom to
choose $N$.
As a result, QMC sampling, which equidistributes
sample points through the domain of interest is attractive.

QMC sampling is well developed
for numerical integration
of functions defined on the unit cube $[0,1]^d$.
The quantity $\mu = \int_{[0,1]^d} f(\bsx) \rd \bsx  $ is
approximated by an equal weight rule
$\hat\mu_N=(1/N)\sum_{i=1}^N f(\bsx_i)$ for carefully chosen $\bsx_i\in[0,1]^d$.
The accuracy of QMC is customarily measured via
the Koksma-Hlawka inequality (see \cite{nied92}).
There
$|\hat\mu_N-\mu|\le D_N^*(\bsx_1,\dots,\bsx_N)\times V_\hk(f)$
where $D_N^*$ is the star discrepancy (a measure of non-uniformity)
of the sample points and $V_\hk$ is the total variation of $f$
in the sense of Hardy and Krause.

The usual approach to sampling these domains
is to apply a mapping $\phi$ from $[0,1]^d$ to the
domain $D$ of interest. The mapping is such that
if $\bsx\sim\dustd([0,1]^d)$ (uniform distribution)
then $\phi(\bsx)\sim\dustd(D)$.
There are typically several choices for such mappings
and the dimension $d$ of the cube is not necessarily
equal to the dimension of $D$.
With such a mapping in hand we may generate QMC
points $\bsx_i\in[0,1]^d$ and use $\phi(\bsx_i)$
as sample points in $D$.

Using this approach we may
estimate $\mu = \int_D g(\bsx)\rd\bsx$
by $(1/n)\sum_{i=1}^nf(\bsx_i)$ where
$f(\bsx) = g(\phi(\bsx))$.
The difficulty is that the composite function
$f = g\circ \phi$ may not be well suited
to QMC; it may have cusps or singularities
or discontinuities.  
These features may diminish the
performance of QMC. At a minimum, they make it more
difficult to analyze QMC's performance.

Recently \cite{bran:colz:giga:trav:2013} presented a
version of the Koksma-Hlawka inequality for the simplex.
They devised a measure of variation for the simplex
and a discrepancy measure for points in the simplex. But they
did not present a sequence of points with vanishing
discrepancy.

\cite{pill:cool:2005} also studied QMC integration
over the simplex. They mention that
the Koksma-Hlawka bound can be applied using the
discrepancy of the original points $\bsx_i$
and the variation of the composite function $g\circ \phi$,
but do not give conditions for that variation to be finite.
They also devised a measure of variation for functions
on the simplex, a corresponding discrepancy measure for points
inside the simplex, and a Koksma-Hlawka bound using
these two factors. But they did not obtain a link between
the cube discrepancy of their original points and the
simplex discrepancy of the image of those points under $\phi$.

Neither \cite{bran:colz:giga:trav:2013}
nor \cite{pill:cool:2005} provide a QMC construction for
the simplex with a vanishing discrepancy.
In this paper we present two constructions for points in
the triangle.  The first is an extensible digital construction
that mimicks the van der Corput sequence and exploits a recursive
partitioning of the triangle. The second resembles a hybrid
of lattice points \citep{sloanjoe} and the Kronecker construction
\citep{larc:nied:1993}. A rectangular grid of points is rotated
through a judiciously chosen angle and those that intersect
the triangle are retained. We combine theorems of
\cite{Chen2007662} and \cite{bran:colz:giga:trav:2013}
to show that our points have vanishing
discrepancy. This second construction has better discrepancy
but the digital one is extensible and is amenable to digital
scrambling among other things.

For both of these constructions, the discrepancy
of \cite{bran:colz:giga:trav:2013} vanishes as the number $N$
of points increases.  
The discrepancy of  \cite{pill:cool:2005} also vanishes.
We believe that these are the first constructions of points
in the triangle to which a Koksma-Hlawka inequality applies.

An outline of this paper is as follows.
Section~\ref{sec:back} presents results
from the literature that we need along with
notation to describe those results.
We show there that the discrepancy of 
\cite{pill:cool:2005} is no larger than
twice that of \cite{bran:colz:giga:trav:2013} so that
the former vanishes whenever the latter does.
In Section~\ref{sec:cons} we adapt the van der Corput
sequence from the unit interval to an arbitrary
triangle. The result is an extensible sequence.
We show that the parallelogram
discrepancy of \cite{bran:colz:giga:trav:2013}
is at most $12/\sqrt{N}$ when using the
first $N$ points of our triangular van der Corput sequence
and it is exactly $2/(3\sqrt{N})-1/(9N)$ when $N=4^k$.
Section~\ref{sec:trikron} develops a second explicit
construction. It rotates a scaled copy of $\ints^2$
through a carefully chosen angle, keeping only those points
that lie within a right angle triangle.  The resulting points
have parallelogram discrepancy $O(\log(N)/N)$ and retain
that discrepancy when mapped to an arbitrary
nondegenerate triangle.
Integration over a triangle is an important sub-problem
in computer graphics. But there the integrands are often
discontinuous and of infinite variation. Quasi-Monte
Carlo over the cube has vanishing error so long
as $f$ is Riemann integrable \citep{nied92}.
Section~\ref{sec:riem} shows that triangular van
der Corput points yield integral estimates with
vanishing error whenever the integrand is merely
Riemann integrable over the triangle.
Section~\ref{sec:disc}  has some final discussion.

We conclude this section by describing some
more of the literature.
\cite{fang:wang:1994} give volume preserving
mappings from the unit cube to the ball,
sphere and simplex in $d$ dimensions all of which
can be used to generate QMC samples in those
other spaces.
\cite{aist:brau:dick:2012} study QMC in the sphere.
\cite{pill:cool:2005} present $5$ different mappings
from the unit cube to the simplex.  Additionally they
consider an approach that embeds the simplex within
a cube and ignores any QMC points from the cube
that do not also lie in the simplex.
\cite{arvo:1995} gives a mapping for spherical triangles.
Further mappings are based on probabilistic
identities, such as those in \cite{devr:1986}.
These mappings are equivalent when applied to uniform
random inputs.  But they differ for QMC points. Some
are many-to-one and others have awkward Jacobians,
inevitable when mapping a region with 4 corners onto one with 3.
Discontinuities and singular Jacobians can
yield infinite variation \citep{variation} when
the integrand is viewed as a function on $[0,1]^d$.

\section{Background}\label{sec:back}

Here  we present some notation that we need.
Then we describe previous results.

The point $\bsx\in\real^d$ has components $x_j$
for $j=1,\dots,d$.
We abbreviate $\{1,2,\dots,d\}$ to $1{:}d$. 
The set $u\subseteq1{:}d$ has cardinality
$|u|$ and complement $-u \equiv\{j\in1{:}d\mid j\not \in u\}$.
The point $\bsx_u\in\real^{|u|}$ contains the components $x_j$ of
$\bsx$ for $j\in u$. 
Sometimes we combine components of two points to make a new
one. Given $\bsx$, $\bsy\in\real^d$ and
$u\subseteq1{:d}$, the hybrid $\bsx_u{:}\bsy_{-u}\in\real^d$ is the 
point $\bsz$ with $z_j= x_j$ for $j\in u$ and $z_j=y_j$ otherwise.
The point $\one$ is the vector of $d$ 1s.
Thus $\bsx_u{:}\one_{-u}$ is the point $\bsx$ after every $x_j$
for $j\in u$ has been replaced by $1$.

Some computations and expressions are simpler with
one triangle than they are with another.  
Let $\ptA$, $\ptB$, and $\ptC$ be three non-collinear points
in $\real^d$.
Those points define the non-degenerate triangle
$$\Delta(\ptA,\ptB,\ptC)
= \{ \omega_1\ptA+\omega_2\ptB+\omega_3\ptC
\mid \min(\omega_1,\omega_2,\omega_3)\ge 0, 
\omega_1+\omega_2+\omega_3=1\}.
$$
The simplex is usually defined via
with corners $(0,0,1)^\tran$ $(0,1,0)^\tran$, and
$(1,0,0)^\tran$. 
For some computations it is convenient to use
the equilateral triangle defined by
$\ptA = (0,0)^\tran$,
$\ptB = (1,0)^\tran$, and
$\ptC = (1/2,\sqrt{3}/2)^\tran$.
For some purposes we may scale the points so that our
triangle has unit area. At other times one scales the
triangle to have area equal to the number $N$ of points in a quadrature rule.
\cite{pill:cool:2005} used the right-angle triangle
\begin{align}\label{eq:tpillcool}
T_{\pc} = \Delta((0,0)^\tran,(0,1)^\tran ,(1,1)^\tran).
\end{align}
Our lattice construction uses
$\Delta((0,0)^\tran,(0,1)^\tran ,(1,0)^\tran)$.

\subsection{Discrepancy}

Here we define the notions of discrepancy
that we need, for quadrature problems over
a set $\Omega$.  We follow 
\cite{bran:colz:giga:trav:2013} in taking
$\Omega$ to be a bounded Borel subset of $\real^d$.
We use $\vol(\cdot)$ to denote $d$-dimensional Lebesgue measure.
If $\Omega$ is contained in a linear flat subset of $\real^d$
then we interpret volumes as Lebesgue measure with respect to the 
lowest-dimensional such linear flat.
To exclude uninteresting cases, we assume that $\vol(\Omega)>0$.
For $N\ge 1$, let $\cp = (\bsx_1,\dots,\bsx_N)$ be a list
of (not necessarily distinct) points in $\real^d$.
For a set $S\subset\real^d$, we let $A_N$ be the counting function,
$A_N(S;\cp) = \sum_{i=1}^N1_{\bsx_i\in S}$.  The signed discrepancy 
of $\cp$ at the measurable set $S\subset\real^d$ is
$$\delta_N(S;\cp,\Omega) 
= \vol(S\cap\Omega)/\vol(\Omega)-A_N(S;\cp)/N.$$
The signed discrepancy has a useful additive property.
If $S_1\cap S_2=\emptyset$, then
\begin{align}\label{eq:additive}
\delta_N(S_1\cup S_2;\cp,\Omega) = 
\delta_N(S_1;\cp,\Omega) +\delta_N(S_2;\cp,\Omega).
\end{align}
Also, $\delta_N(\emptyset;\cp,\Omega)=0$.
The absolute discrepancy 
of points $\cp$ for a class $\cs$ of measurable subsets
of $\Omega$ is
$$
D_N(\cs;\cp,\Omega)=
\sup_{S\in\cs} D_N(S;\cp,\Omega),
\quad\text{where}\quad
D_N(S;\cp,\Omega) = |\delta_N(S;\cp,\Omega)|.$$

For general $\Omega$ it is helpful to extend $\cp$
by all integer shifts, that is
by considering all $\bsx_i+\bsm\in\Omega$
for $i=1,\dots,N$ and $\bsm\in\ints^d$. Because $\Omega$ is bounded,
the extension still has finitely many points.  We define
the extended count
$$\bar A_N(S;\cp) = \sum_{\bsm\in\ints^d}\sum_{i=1}^N1_{\bsx_i+\bsm\in S}$$
and then take
\begin{align}\label{eq:extendeddiscrep}
\bar D_N(\cs;\cp,\Omega)= \sup_{S\in\cs}|\bar \delta_N(S;\cp,\Omega)|,
\end{align}
where $\bar \delta_N(S;\cp,\Omega) =
\vol(S\cap\Omega)/\vol(\Omega)-\bar A_N(S;\cp)/N$.
Notice that $\bar A_N$ is divided by $N$, and not
the number of extended points lying in $\Omega$.
When $\Omega$ is understood, we may simplify the
discrepancies to $D_N(\cs;\cp)$,  $\bar D_N(\cs;\cp)$.
Likewise $\cs$ can be omitted.

Standard quasi-Monte Carlo sampling \citep{nied92}  works with
$\Omega = [0,1)^d$ and takes for $\cs$ the set of 
anchored boxes $[0,\bsa)$ with $\bsa\in[0,1)^d$.
Then $D_N(\cs;\cp)$ above is the star-discrepancy
$D_N^*(\cp)$.
Now let a real-valued function $f$ be defined on $[0,1]^d$
(not just $[0,1)^d$) with variation
$V_\hk(f)$ in the sense of Hardy and Krause. Then
the Koksma-Hlawka inequality is
$$
\Bigl|\frac1N\sum_{i=1}^Nf(\bsx_i) - \int_{[0,1)^d}f(\bsx)\rd\bsx\Bigr|
\le D_N^*(\cp)V_\hk(f).
$$
If the needed derivatives are continuous, then
$$
V_\hk(f) = \sum_{u\subseteq{1:d},u\ne\emptyset}
\int_{[0,1]^{|u|}} 
\Bigl|
\frac{\partial^{|u|}}{\partial\bsx_u}
f(\bsx_u{:}\one_{-u})
\Bigr|\rd\bsx_u.
$$

\cite{bran:colz:giga:trav:2013} provide a Koksma-Hlawka
inequality for parallelepipeds.
Unlike the usual Koksma-Hlawka inequality, their variation
measure sums integrals over all faces of all dimensions
of the parallepiped.
They then represent the indicator function of a simplex
defined by $d+1$ corner points 
as the weighted sum of indicators of $d+1$ parallepipeds.
The $j$'th parallepiped has one vertex at the $j$'th
corner of the simplex and its $d$ defining vectors extend
from that $j$'th vertex to the other $d$ corners.
Their non-negative weighting function varies spatially, summing to
$1$ within the simplex.
They then obtain a Koksma-Hlawka inequality for the simplex
based on their inequality for parallepipeds.

Here we present their discrepancy measure for the
case of a triangle with corners $\ptA$, $\ptB$ and $\ptC$.
For real values $a$ and $b$, let
$\calt_{a,b,\ptC}$ be the parallelogram
defined by the point $\ptC$ with vectors
$a(\ptA-\ptC)$ and $b(\ptB-\ptC)$.
One such parallelogram is illustrated in Figure~\ref{fig:brandotabc}
where it has vertices $C$, $D$, $F$ and~$E$.
Let
\begin{align}\label{eq:trapc}
\cs_C & = 
\{ \calt_{a,b,\ptC}\mid 0<a<\Vert\ptA-\ptC\Vert, 0<b<\Vert\ptB-\ptC\Vert\}
\end{align}
and define $\cs_A$ and $\cs_B$ analogously.
Then the parallelogram discrepancy of points $\cp$
for $\Omega = \Delta(\ptA,\ptB,\ptC)$ is
$$
D_N^{\pr}(\cp;\Omega)
 = D_N(\cs_\pr;\cp,\Omega),\quad\text{for}\quad
\cs_\pr  = \cs_A\cup\cs_B\cup\cs_C.
$$

\begin{figure}[t]
\center
\includegraphics[scale = 0.45]{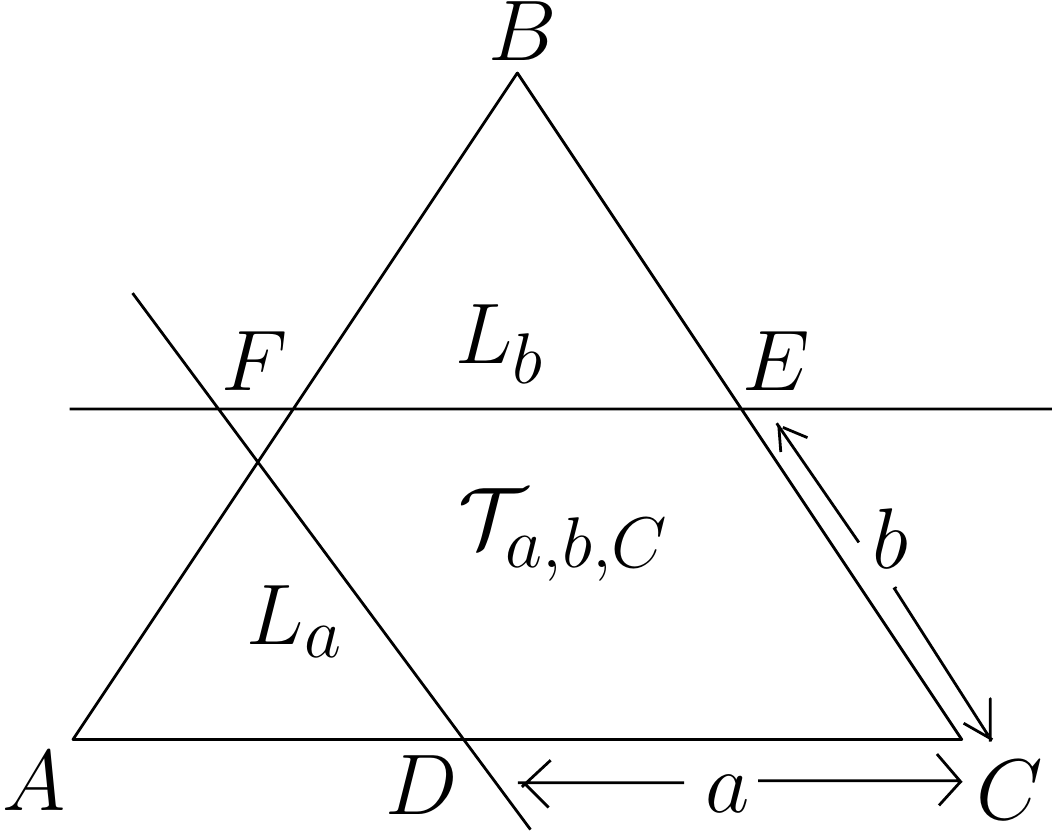}
\caption{
\label{fig:brandotabc}
The construction of the parallelogram $\mathcal{T}_{a,b,C}=CDFE$.}
\end{figure}

\cite{pill:cool:2005} also define a discrepancy
for simplices.  For simplices with three vertices,
their $\Omega$ is the triangle $T_{\pc}$
from~\eqref{eq:tpillcool}.
They measure discrepancy using anchored boxes,
studying 
\begin{align}
D_N^\pc(\cp;T_\pc) =D_N(\cs_I,\cp,T_\pc)\quad\text{where}\quad
\cs_I &= \{ [0,\bsa)\mid \bsa\in[0,1)^2\}.
\end{align}

\begin{lemma}\label{lem:discineq}
Let $T_\pc$ be the triangle from~\eqref{eq:tpillcool}
and for $N\ge1$, let  $\cp$ be the list of
points $\bsx_1,\dots,\bsx_N\in T_\pc$.
Then 
$D_N^\pc(\cp,T_\pc) \le 2D_N^\pr(\cp,T_\pc)$
and $\bar D_N^\pc(\cp,T_\pc) \le 2\bar D_N^\pr(\cp,T_\pc).$
\end{lemma}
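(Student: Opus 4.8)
The plan is to estimate $\delta_N$ on a single anchored box and then take a supremum. Fix $\bsa=(a_1,a_2)\in[0,1)^2$ and put $S=[0,\bsa)$. Since every point of $\cp$ lies in $T_\pc$, one has $A_N(S;\cp)=A_N(S\cap T_\pc;\cp)$, so $\delta_N(S;\cp,T_\pc)=\delta_N(S\cap T_\pc;\cp,T_\pc)$. Writing $T_\pc=\{(x,y):0\le x\le y\le 1\}$ and using $a_2<1$, a short computation gives
$$S\cap T_\pc \;=\; R\;:=\;\{(x,y):0\le x< r,\ x\le y< a_2\},\qquad r:=\min(a_1,a_2).$$
(If $r=0$ then $R=\emptyset$ and there is nothing to prove; note also $r\le a_2$ in every case.) Thus it suffices to show $|\delta_N(R;\cp,T_\pc)|\le 2D_N^{\pr}(\cp,T_\pc)$, and likewise with $\bar\delta_N$ in place of $\delta_N$.

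The heart of the matter is to exhibit $R$, inside $T_\pc$, as the difference of a parallelogram from $\cs_A$ and one from $\cs_B$. For $a_2\le a<1$ let $\calt_{a,r,A}\in\cs_A$ be the parallelogram at the vertex $A$ with edge vectors of lengths $a$ and $r$ along $AB$ and $AC$; in coordinates $\calt_{a,r,A}=\{(x,y):0\le x<r,\ x\le y<x+a\}$, a legitimate member of $\cs_A$ since $a<1=\|A-B\|$ and $r<1<\sqrt2=\|A-C\|$. Because $a\ge a_2$ one checks $\calt_{a,r,A}\cap\{y<a_2\}=R$, so $R$ and $\calt_{a,r,A}\cap\{y\ge a_2\}$ partition $\calt_{a,r,A}$, and the additive property \eqref{eq:additive} yields
$$\delta_N(R;\cp,T_\pc)\;=\;\delta_N(\calt_{a,r,A};\cp,T_\pc)\;-\;\delta_N\bigl(\calt_{a,r,A}\cap\{y\ge a_2\};\cp,T_\pc\bigr).$$
The first term is at most $D_N^{\pr}$ in absolute value because $\calt_{a,r,A}\in\cs_A\subseteq\cs_{\pr}$. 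For the second term, intersecting with $T_\pc$ (harmless, as before) gives $\calt_{a,r,A}\cap\{y\ge a_2\}\cap T_\pc=\{(x,y):0\le x<r,\ a_2\le y<\min(x+a,1)\}$, which converges, as $a\uparrow 1$, to the rectangle $[0,r)\times[a_2,1)$ — essentially the member $\calt_{1-a_2,r,B}=[0,r)\times(a_2,1]$ of $\cs_B$ (here the inequality $r\le a_2$ is exactly what keeps this rectangle inside $T_\pc$, uncut by the diagonal $x=y$). Passing to the limit $a\to1^-$ (the left-hand side does not depend on $a$) and using $|\delta_N(\calt_{1-a_2,r,B};\cp,T_\pc)|\le D_N^{\pr}$, the triangle inequality gives $|\delta_N(R;\cp,T_\pc)|\le 2D_N^{\pr}(\cp,T_\pc)$. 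Taking the supremum over $\bsa\in[0,1)^2$ proves $D_N^{\pc}\le 2D_N^{\pr}$. The estimate for $\bar D_N$ is obtained verbatim with $A_N$ replaced by the shifted count $\bar A_N$ throughout; since $T_\pc\subseteq[0,1)^2$, only the zero shift ever contributes and the argument is unchanged.

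I expect the geometry near the hypotenuse of $T_\pc$ to be the delicate point. A genuine parallelogram of $\cs_A$ has a side of length strictly below $\|A-B\|=1$ along the vertical edge, so on its own it can never reproduce the part of $R$ that rests against the line $y=1$; the identity turning the leftover set into a $\cs_B$ rectangle therefore holds only in the limit $a\to1^-$, and that limit has to be pushed through with care. Intertwined with this is the routine discrepancy-theory technicality that the sets appearing above differ from true members of $\cs_A$, $\cs_B$ by finitely many segments of Lebesgue measure zero, on which $\cp$ may place points; this should be dispatched by the standard observation that $\bsa\mapsto\delta_N([0,\bsa);\cp,T_\pc)$ and the analogous parallelogram functionals are one-sided continuous, so the suprema defining $D_N^{\pc}$ and $D_N^{\pr}$ are unaffected if the open faces of $\cs_I$, $\cs_A$, $\cs_B$, $\cs_C$ are replaced by closed ones.
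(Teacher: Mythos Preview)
Your argument is correct, and the technicalities you flag in the final paragraph can indeed be disposed of: for any set $P_0$ lying between the interior and the closure of a parallelogram $P\in\cs_B$, one has $\delta_N(P_0)$ sandwiched between $\lim_{\epsilon\downarrow 0}\delta_N(P_{-\epsilon})$ and $\lim_{\epsilon\downarrow 0}\delta_N(P_{+\epsilon})$ (shrunken and enlarged copies in $\cs_B$), so $|\delta_N(P_0)|\le D_N^\pr$. This covers both your limit set $\mathrm{rem}_\infty\cap T_\pc$ and the half-open/closed mismatches you mention.

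That said, the paper's route is considerably shorter. Rather than first intersecting with $T_\pc$ and then splitting $R$ between $\cs_A$ and $\cs_B$, the paper works with the raw anchored box and uses two parallelograms anchored at the \emph{same} vertex $(0,1)$ (your $B$, the paper's $C$). Since the edges of $T_\pc$ meeting at $(0,1)$ are axis-parallel, every $\calt_{\cdot,\cdot}$ at that vertex is an axis-aligned rectangle, and one has the exact identity
\[
[0,a_1)\times[0,a_2)\;=\;\bigl([0,a_1)\times[0,1)\bigr)\setminus\bigl([0,a_1)\times[a_2,1)\bigr),
\]
a difference of two members of $\cs_C$. Additivity of $\delta_N$ then gives the bound in one line, with no limiting argument and no preliminary intersection with the triangle. (The first rectangle has one side of full length $1$, so strictly speaking it sits on the boundary of $\cs_C$; this is the same harmless technicality you already identified.) The gain in your approach is negligible here, but the idea of pairing parallelograms at different vertices could be useful in situations where no single vertex has both incident edges aligned with the anchored-box axes.
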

\begin{proof}
Let $[0,a_1)\times[0,a_2)$ be an anchored box in $[0,1]^2$.
We may write it as the difference $[0,a_1)\times[0,1) - [0,a_1)\times[a_2,1)$
of sets in $\cs_C$ taking $C$ to be the vertex $(0,1)^\tran$
of $T_\pc$.
Then $D_N^\pc(\cp;T_\pc) \le 2D_N(\cs_C,\cp,T_\pc)\le 2D_N^\pr(\cp,T_\pc)$.
The same argument holds for $\bar D_N$.
\end{proof}

From Lemma~\ref{lem:discineq} we see that a sequence
with vanishing parallel discrepancy will also have
vanishing discrepancy in the sense of Pillands and Cools.

\subsection{Koksma-Hlawka}
\cite{bran:colz:giga:trav:2013}
define a corresponding variation measure that
we will call $V_{\pr}(f)$.  The specialization of
this measure to the triangle appears on the last
page of their article.  Rather than reproduce it here
we remark that it is a weighted sum of some integrals over the triangle,
some integrals over the edges of the triangle, and function evaluations
at the corners of the triangle.  The corner evaluations are absolute
values of $f$ at those corners.  The edge integrals are 
averages of $|f|$ plus the absolute value of the interior
directional derivative of $f$ along that edge.
The integrand on the whole triangle sums the absolute value of
$3f$ as well as first order directional derivatives of $2f$
and second order directional derivatives of $f$.
The entire sum is multiplied by a constant $C_2>0$ known to be finite.
Note that their variation is positive for (nonzero)
constant functions.

The numerical treatment of sample points $\bsx_i$
is different when those points
are on the boundary of $\Omega$.
Let $\Omega$ be a closed polytope in $\real^d$
not lying in a flat of dimension $d-1$ or less.
Then we define the weight function
$$w_\Omega(\bsx)
=\begin{cases}
0, & \bsx\not\in\Omega,\\
1, & \bsx\in\text{the interior of $\Omega$},\\
2^{k-d} & \bsx\in\text{a $k$-dimensional face of $\Omega$.}
\end{cases}
$$
The integer $k$ is understood to be the smallest dimension
of any face of $\Omega$ that contains $\bsx$.
When $\Omega$ lies in a lower dimensional flat
we work instead with the relative interior of $\Omega$
and similarly replace $d$ by the smallest containing
dimension.  
Given $\cp$ with points $\bsx_1,\dots,\bsx_N$ and
a function $f$ on $\Omega$ we define
\begin{align}\label{eq:weightedf}
\sum^*_{\cp,\Omega} f 
= \sum_{i=1}^N\sum_{\bsm\in\ints^d}f(\bsx_i+\bsm)
w_\Omega(\bsx_i+\bsm).
\end{align}

\begin{theorem}\label{thm:brando}
Let $\cp$ be a list of $N$ points in $\real^d$
and let $\Omega = \Delta(\bsa,\bsb,\bsc)$ be a non-degenerate triangle
in $\real^d$. Then
$$
\Bigl|
\int_{\Omega} f(\bsx)\rd\bsx
-\frac1N\sum^*_{\cp,\Omega} f 
\Bigr|
\le \bar D_N^\pr(\cp,\Omega) \times V_\pr(f).
$$
\end{theorem}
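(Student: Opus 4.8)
The plan is to carry out, in the generality needed here, the reduction used by \cite{bran:colz:giga:trav:2013}: express the triangle's indicator as a weighted sum of indicators of three parallelograms, flatten each parallelogram onto the unit square by an affine chart, and apply there the classical ``one coordinate at a time'' integration-by-parts that proves the cube Koksma--Hlawka inequality. Fix $\Omega=\Delta(\bsa,\bsb,\bsc)$. For a vertex $v\in\{\bsa,\bsb,\bsc\}$ let $P_v$ be the parallelogram with a corner at $v$ whose two edge vectors run from $v$ to the other two corners of $\Omega$; then $P_v$ is $\Omega$ together with the reflection of $\Omega$ through the midpoint of the edge opposite $v$, its affine chart $T_v\colon[0,1]^2\to P_v$ carries $\Omega$ onto the sub-triangle $\{(s,t):s+t\le1\}$, and the members of $\cs_v$ (the sub-parallelograms anchored at $v$) are exactly the images under $T_v$ of the anchored boxes $[0,\bsu)$.

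First I would record the two decomposition identities. There are continuous, nonnegative, piecewise-affine weights $\lambda_\bsa,\lambda_\bsb,\lambda_\bsc$ -- in the chart $T_v$ the weight $\lambda_v$ equals the ramp $(1-s-t)_+$, i.e.\ the barycentric coordinate of $v$ truncated at the edge opposite $v$ and extended by $0$ off $P_v$ -- with $\lambda_\bsa+\lambda_\bsb+\lambda_\bsc\equiv1$ on $\Omega$, so that $1_\Omega=\sum_v\lambda_v1_{P_v}$ holds pointwise on $\real^d$. I would pair this with the companion pointwise identity $w_\Omega=\sum_v\lambda_v w_{P_v}$ on $\real^d$, checked by a short case analysis over the interior, edges and corners of $\Omega$ and of each $P_v$ (a relative boundary face of $\Omega$ is either a crease of exactly one ramp $\lambda_v$, lying on a diagonal of $P_v$, or an edge shared by two of the parallelograms). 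Summing both identities over $\bsm\in\ints^d$ and pairing with $f$ gives the exact splitting
\[
\int_\Omega f\rd\bsx-\frac1N\sum^*_{\cp,\Omega}f
=\sum_{v\in\{\bsa,\bsb,\bsc\}}\Bigl(\int_{P_v}\lambda_v f\rd\bsx-\frac1N\sum^*_{\cp,P_v}(\lambda_v f)\Bigr),
\]
so it is enough to bound the three parallelogram errors, the bounds summing to the claimed right-hand side.

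Next, for a fixed $v$, I would pull the $v$-th summand back through $T_v$ and run the iterated Abel summation of the cube proof (as in \cite{nied92}) on $\wt g:=(\lambda_v f)\circ T_v$. This expresses the parallelogram error through signed discrepancies of $\cp$ at the parallelograms $T_v([0,\bsu))\in\cs_v\subseteq\cs_\pr$, integrated against successive partial derivatives of $\wt g$ over the faces of the square; after tracking the volume normalizations, each such discrepancy is bounded by $\sup_{S\in\cs_\pr}|\bar\delta_N(S;\cp,\Omega)|=\bar D_N^\pr(\cp,\Omega)$. Because $\wt g=(1-s-t)_+\cdot(f\circ T_v)$, differentiating it either puts a derivative on $f$, with $(1-s-t)_+$ or one of its bounded first partials riding along as a smooth multiplier -- feeding interior integrals over $\Omega$ of first- and second-order directional derivatives of $f$ along the edges of $\Omega$, and edge integrals along the two edges of $\Omega$ meeting at $v$ -- or puts the mixed partial on the ramp, whose distributional value is a positive unit mass on the edge of $\Omega$ opposite $v$ -- feeding an edge integral of $|f|$ there -- while the corner $(0,0)$ of the square maps to $v$ and feeds $|f(v)|$. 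Summing over the three vertices, and using $\lambda_\bsa+\lambda_\bsb+\lambda_\bsc\equiv1$ on $\Omega$ together with the fact that each edge of $\Omega$ is met by two of the $P_v$, these contributions collapse into exactly the interior integrals (of $|f|$, of first-order directional derivatives, and of second-order directional derivatives of $f$, with the multiplicities $3$, $2$, $1$ of $V_\pr$), the edge integrals (of $|f|$ and of the directional derivative of $f$ along the edge) and the absolute corner values that constitute $V_\pr(f)$; absorbing the Jacobians $\vol(P_v)$, the sup-norms of $\lambda_v$ and its first partials, and the combinatorial constants from the Abel summation into a single finite $C_2>0$ finishes the argument.

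The main obstacle is this last reassembly: one must verify that the many interior, edge and corner terms produced on the three parallelograms -- those from differentiating the ramp weights and those inherent to the periodized expansion -- telescope into precisely the terms of $V_\pr$ with one common finite constant, with nothing left over that is not controlled by the triangle-normalized discrepancy over the three vertex families $\cs_\bsa,\cs_\bsb,\cs_\bsc$. Threading the boundary weights $w_\Omega$ and the $\ints^d$-shifts -- together with the attendant volume normalizations -- consistently through both the indicator decomposition and the affine charts is the second delicate point, and it is exactly what the companion identity $w_\Omega=\sum_v\lambda_v w_{P_v}$ is designed to handle; by contrast, the two reductions themselves, the parallelogram decomposition and the affine change of variables, are routine once set up.
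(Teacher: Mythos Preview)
The paper does not prove this statement from first principles: its entire proof is the one-line citation ``This is Theorem 3.2 of \cite{bran:colz:giga:trav:2013} specialized to the triangle.'' Your proposal, by contrast, sketches the argument that Brandolini et al.\ actually carry out in that reference --- the decomposition $1_\Omega=\sum_v\lambda_v1_{P_v}$ via barycentric ramp weights, the affine chart reducing each parallelogram to the unit square, and the iterated Abel summation there. So you are following the same route as the cited source, only supplying more detail than the present paper does; in that sense the approaches agree.

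The obstacle you flag at the end --- verifying that the interior, edge and corner contributions from the three parallelograms, together with the boundary weights $w_\Omega$ and the $\ints^d$-shifts, telescope into exactly $V_\pr(f)$ with a single finite constant $C_2$ --- is real, and is precisely the bookkeeping that Theorem~3.2 of \cite{bran:colz:giga:trav:2013} packages. The paper is content to invoke that packaging rather than reproduce it, so your sketch is strictly more informative than the paper's own proof while remaining faithful to the same underlying argument.
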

\begin{proof}
This is Theorem 3.2 of 
\cite{bran:colz:giga:trav:2013} specialized to the triangle.
\end{proof}

\subsection{Transformations}
Given two non-degenerate triangles $\Delta(\ptA,\ptB,\ptC)\subset\real^D$
and $\Delta(a,b,c)\subset\real^d$ there is a linear
mapping $M$ from $\real^D$ to $\real^d$
with $M\ptA=a$, $M\ptB=b$, and $M\ptC=c$.
If we make a transformation of $\bsx_i$ to $M\bsx_i$
and call the resulting points $M\cp$,
then
$D_N^\pr(M\cp;\Delta(a,b,c))
=D_N^\pr(\cp;\Delta(\ptA,\ptB,\ptC))$.
The same does not hold for $D_N^\pc$ because
a linear transformation can map anchored boxes onto
parallelepipeds.

\section{Triangular van der Corput points}\label{sec:cons}

The digital construction we use works by
lifting the construction of~\cite{vand:1935:I} 
from the unit interval to the triangle.
In van der Corput sampling of $[0,1)$
the integer $i\ge0$ is written in the integer base $b\ge2$
as $\sum_{k\ge1}d_kb^{k-1}$ where $d_k=d_k(i)\in\{0,1,\dots,b-1\}$.
Then $i$ is mapped to $x_i = \sum_{k\ge 1}d_kb^{-k}$.
The points $x_1,\dots,x_{n}\in[0,1)$ have a discrepancy
of $O(\log(n)/n)$. 

For triangular van der Corput points,
we first partition the triangle into $4$ congruent
subsets as shown by the leftmost panel
in Figure~\ref{fig:subdiv}. 
We assign base $4$ digits $0$ through $3$ to these 
subtriangles with $0$ in the center and
the others subject to an arbitrary choice.
Each such triangle can be partitioned again
in a similar manner as shown in the second panel.

\begin{figure}[t]
\centering
\includegraphics[width=\hsize]{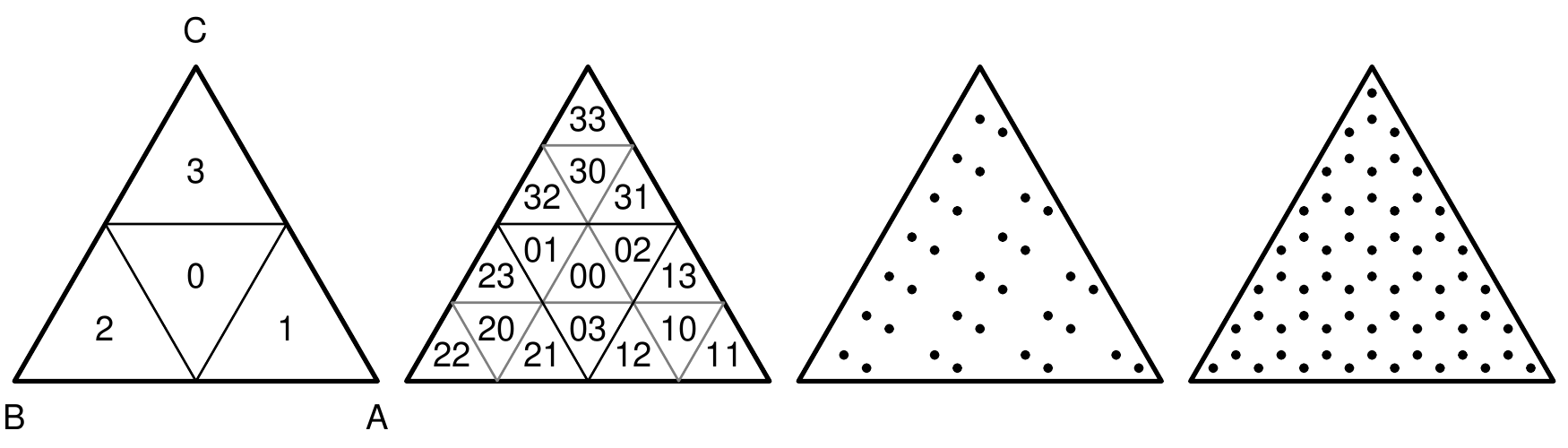}
\caption{\label{fig:subdiv}
A labeled subdivision of $\Delta(A,B,C)$ into $4$ and then
$16$ congruent subtriangles. Next are the first
$32$ triangular van der Corput points followed
by the first $64$.
}
\end{figure}

We write the integer $i>0$ in a base
$4$ representation $i=\sum_{k=1}^{K_i}d_k4^{k-1}$
where $d_k=d_k(i)\in\{0,1,2,3\}$ and $K_i = \lceil\log_4(i)+1\rceil$.
Given a triangle $T$, we map the integer $i$ to the point
$f_T(i)\in T$ as follows.
First we identify the subtriangle of $T$ corresponding to $d_1$.
Call it  $T(d_1)$. 
Then we get the subtriangle $T(d_1,d_2)=(T(d_1))(d_2)$ corresponding
to digit $d_2$ within $T(d_1)$, and so on.
This process maps the integer $i$ to the triangle
$T(d_1,d_2,\dots,d_{K_i})$. 
The point $f_T(i)$ is the center point of
triangle $T(d_1,d_2,\dots,d_{K_i})$. The center of the
triangle is the arithmetic average of its vertices.
The triangle $T(d_1,d_2,\dots,d_{K_i},0,0,\dots,0)$
also has center $f_T(i)$, and as we increase the number
of zeros beyond $d_{K_i}$, the three corners of the
resulting triangle all converge to $f_T(i)$.
For $i=0$, our convention is that $f_T(0)$ is the center of
the original triangle $T$.

We have not yet formally specified which subtriangle
of $T(d_1)$ we mean by $T(d_1,d_2)$ when $d_2\ne 0$.
To make this precise,
let $T=\Delta(A,B,C)$  be an arbitrary triangle.
Then for $d\in\{0,1,2,3\}$ the subtriangle of $T$ is
$$
T(d) = 
\begin{cases}
\Delta\bigl( \frac{B+C}2,\frac{A+C}2,\frac{A+B}2\bigr), & d=0\\[0.5ex]
\Delta\bigl( A, \frac{A+B}2,\frac{A+C}2\bigr), & d=1\\[0.5ex]
\Delta\bigl( \frac{B+A}2, B,\frac{B+C}2\bigr), & d=2\\[0.5ex]
\Delta\bigl( \frac{C+A}2, \frac{C+B}2,C\bigr), & d=3.
\end{cases}
$$
This pattern is followed in Figure~\ref{fig:subdiv}.
If we represent the triangle $T$ by a vector of the three
corner points $A$, $B$, and $C$, then
$T(1) = (A+T)/2$ componentwise, and similarly
$T(2) = (B+T)/2$, $T(3)=(C+T)/2$ and
$T(0) = (A+B+C)/2-T/2$.

This construction defines an infinite sequence
of $f_T(i)\in T$ for integers $i\ge 0$.
For an $n$ point rule, 
take $\bsx_i=f_{T}(i-1)$ for $i=1,\dots,n$.

This triangular van der Corput sequence has several
desirable properties.
First, it is extensible.  If we have sampled $n$
points and find that we need $m$ more we
simply take the next $m$ points in the sequence.
Second, it is balanced.  
If $n=4^k$ then we get the centers of a symmetric 
triangulation as shown by the final panel in Figure~\ref{fig:subdiv}.
If our sample is not a multiple of $4^k$, we still have
reasonable balance, as illustrated by the third panel in
Figure~\ref{fig:subdiv}. There are $32$ points of which
the second $16$ points fall into gaps left by the first $16$
points.

\subsection{Discrepancy of triangular van der Corput points}\label{sec:disc}

In this subsection, we state and prove some results
on the parallel discrepancy of the triangular
van der Corput points.
That discrepancy is the same for
any triangle.  We will work with an equilateral
triangle $\etri$ of unit area so that discrepancy
calculations reduce to computing areas and
counting points. Such a triangle
has sides of length $\ell = 2/\sqrt[4]{3}$.

Our discussion of these points revolves around
a standard decomposition of $\etri$ into $N=4^k$
subtriangles of area $1/N$. These subtriangles are similar
to $\etri$ and have sides parallel to those of $\etri$.
The first two panels in Figure~\ref{fig:subdiv} depict
such decompositions into $4$ and $16$ subtriangles, respectively.
The points $\bsx_i=f_\etri(i-1)$ for $1\le i\le 4^k$ are at
the centroids of these subtriangles.
When we plot $\etri$ with a horizontal base below its peak,
then $2^k(2^k+1)/2$ of the subtriagles will also be
pointing up that way. We call these upright subtriangles.
We call the remaining $2^k(2^k-1)/2$ subtriangles
inverted subtriangles.

For our purposes here a line segment `touches' a triangle if it
intersects an \emph{interior} point of that triangle, splitting
it into two subsets of positive area. 

\begin{theorem}\label{thm:vdcdiscrep}
For an integer $k\ge0$ and non-degenerate
triangle $\Omega=\Delta(A,B,C)$, let $\cp$ consist of $\bsx_i = f_\Omega(i-1)$
for $i=1,\dots,N=4^k$. Then
$$
D_N^\pr(\cp;\Omega) = \begin{cases}
\dfrac79, & N=1\\[1ex]
\dfrac2{3\sqrt{N}}-\dfrac1{9N},&\text{else.}
\end{cases}
$$
\end{theorem}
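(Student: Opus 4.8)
The plan is to exploit the fact that when $N = 4^k$ the points $\bsx_i = f_\Omega(i-1)$ are exactly the centroids of the $N$ similar subtriangles in the standard decomposition, and that the parallelogram discrepancy is invariant under the affine map to the equilateral triangle $\etri$ of unit area (this is the transformation invariance stated before Section~\ref{sec:cons}). So I would fix $\Omega = \etri$ once and for all, and work purely with areas and point counts. For a parallelogram $\calt = \calt_{a,b,\ptC}\in\cs_C$ (and symmetrically for $\cs_A,\cs_B$), the quantity $\delta_N(\calt;\cp,\etri) = \vol(\calt\cap\etri) - \tfrac1N A_N(\calt;\cp)$, since $\vol(\etri)=1$. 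The key combinatorial observation is that a subtriangle of area $1/N$ contributes its full $1/N$ to $\vol(\calt\cap\etri)$ if $\calt$ covers it, contributes $0$ if $\calt$ misses it, and contributes something strictly between if the boundary of $\calt$ "touches" it (splits its interior). Meanwhile $\tfrac1N A_N(\calt;\cp)$ counts $1/N$ for each subtriangle whose centroid lies in $\calt$. So for each subtriangle that is entirely inside or entirely outside $\calt$ the two terms cancel, and the discrepancy is a sum of contributions only from the subtriangles that $\calt$'s boundary touches. This reduces the problem to a one-dimensional accounting along the at most two boundary directions of $\calt$ (the directions of $A-C$ and $B-C$), i.e. to understanding how a "staircase" of subtriangles is cut by a line parallel to a side of $\etri$.

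The next step is the geometric heart: analyze a single sweeping line parallel to a side of $\etri$. As the line moves across $\etri$ it successively enters rows of subtriangles; I would parametrize its position by the fraction $t\in(0,1)$ of the area of $\etri$ on the "included" side, and track $g(t) := \big(\text{included area}\big) - \tfrac1N\big(\text{number of centroids on the included side}\big)$. Because each row of subtriangles is cut cleanly and the centroids sit at known heights (the centroid of a subtriangle of "height level" is at the $2/3$ or $1/3$ point of its own vertical extent, depending on whether it is upright or inverted), $g$ is piecewise linear in $t$ with controlled breakpoints, and one computes that $|g(t)|$ is maximized when the line passes just past or just short of a layer of centroids; the extreme value works out to something of order $1/\sqrt N$ coming from a single partially-covered row of $\sim 2^k$ subtriangles each contributing an $O(1/N)$ imbalance, i.e.\ a total of $O(2^k/N) = O(1/\sqrt N)$. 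Doing this carefully for one direction and then combining the two independent boundary cuts of $\calt_{a,b,\ptC}$ (the corner contribution at $\ptC$ is automatically consistent with the corner subtriangle) should yield the exact extremal configuration and the exact constant $\tfrac{2}{3\sqrt N}-\tfrac1{9N}$. The $N=1$ case is a direct hand computation: one point at the centroid, and the worst parallelogram $\calt_{a,b,C}$ is the one that degenerates to (nearly) all or (nearly) none of $\etri$ while excluding/including the lone centroid, giving $|{\vol}-{A_1}| = \max(|1-0|,|0-1|)$ adjusted by the centroid geometry, which one checks equals $7/9$.

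The main obstacle I anticipate is pinning down the \emph{exact} optimizing parallelogram rather than just the rate. Showing $D_N^\pr = O(1/\sqrt N)$ via the touched-subtriangle count is relatively soft, but to get the precise constant $\tfrac{2}{3\sqrt N} - \tfrac1{9N}$ one must (i) identify which row/layer of subtriangles and which offset of the cutting line maximizes the signed area-minus-count imbalance, (ii) verify that using a second non-parallel cut (the other side of the parallelogram) cannot increase the imbalance beyond what a single cut gives — i.e.\ that the worst $\calt$ effectively has one side flush against a side of $\etri$ — and (iii) handle the fact that centroids lying exactly on the boundary of $\calt$ are a measure-zero nuisance that, by taking $\calt$ slightly open or closed, does not affect the supremum. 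I would organize these as a sequence of lemmas: first an "only touched subtriangles matter" reduction, then an explicit formula for the single-line discrepancy function $g(t)$ and its maximum, then an argument that the second cut is not helpful, and finally assembling the pieces and separately dispatching $N=1$.
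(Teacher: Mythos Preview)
Your proposal is correct and mirrors the paper's own proof: reduce to the unit-area equilateral triangle by affine invariance, note that only subtriangles touched by the parallelogram's boundary lines $L_a,L_b$ contribute to the signed discrepancy, compute the single-line extremum explicitly to obtain $\tfrac{2}{3\sqrt N}-\tfrac{1}{9N}$, and then check that adding the second cut cannot exceed this value. The paper executes that last step --- precisely the obstacle you flag --- by splitting into four cases according to where $L_a\cap L_b$ lies (in an upright subtriangle, in an inverted one, or outside $\etri$ in two configurations) and tabulating the signed discrepancies of the touched subtriangles and adjacent trapezoid pairs; the two-cut case ties but never beats the single-cut bound.
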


The proof of Theorem~\ref{thm:vdcdiscrep} requires
consideration of numerous subcases.  We defer it to
Section~\ref{sec:proveit}.
For $N=1$, the maximal discrepancy is attained
by a parallelogram 
just barely including the center point
and holding $2/9$ of the area. It has positive signed discrepancy.
For $N=4^k>1$, the maximal discrepancy is attained 
(in the limit) by
the trapezoid just barely excluding all $\sqrt{N}$
van der Corput points in the `bottom row' of $\etri$ and the
signed discrepancy is negative. 
The same limit is also attained in the limit
for a sequence of trapezoids having positive signed discrepancy.

\begin{theorem}\label{thm:any4tothek}
Let $\Omega$ be a nondegenerate triangle, and
let $\cp$ contain points $\bsx_i = f_\Omega(s+i-1)$, 
$i=1,\dots,N=4^k$, for a starting integer $s\ge1$ 
and an integer $k\ge 0$. Then
\[D_N^\pr(\cp;\Omega) \leq \frac{2}{\sqrt{N}} - \frac{1}{N}.
\]
\end{theorem}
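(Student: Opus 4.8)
The plan is to reduce the statement for an arbitrary starting index $s$ to the already-established exact computation in Theorem~\ref{thm:vdcdiscrep}, which handles the case $s=1$ (equivalently $s=0$ in the $f_\Omega$ indexing). Since parallelogram discrepancy is invariant under the linear map carrying one triangle to another, I may as well work with the unit-area equilateral triangle $\etri$ and its standard subdivision into $N=4^k$ congruent subtriangles of area $1/N$. The key structural fact is that $\cp = \{f_\etri(s),\dots,f_\etri(s+N-1)\}$ always consists of one centroid chosen from each of the $N$ subtriangles of the level-$k$ subdivision — this is exactly the balance property noted after the construction. Indeed, writing indices in base $4$, the last $k$ digits of $s,s+1,\dots,s+N-1$ range over all $4^k$ possibilities, so the level-$k$ subtriangle containing $f_\etri(s+j)$ runs over all $N$ subtriangles as $j$ ranges over $0,\dots,N-1$; the point inside a given subtriangle $T$ is $f_T(\lfloor (s+j)/4^k\rfloor)$, some centroid of a further subdivision of $T$, hence an interior point of $T$.

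**Main estimate.** Fix any parallelogram $S\in\cs_\pr$ for $\etri$, and let $U$ denote the union of those level-$k$ subtriangles that $S$ \emph{touches} (in the sense defined just before Theorem~\ref{thm:vdcdiscrep}, i.e.\ $S$ splits the subtriangle into two pieces of positive area). Subtriangles not touched by $S$ are either entirely inside $S$ or entirely outside it (up to boundary), and for those the local count matches the local area exactly: each contributes its full $1/N$ of area and exactly one point if inside, zero if outside. Hence
\[
|\delta_N(S;\cp,\etri)| = \Bigl|\,\vol(S\cap U) - \tfrac{1}{N}A_N(S\cap U;\cp)\,\Bigr| \le \frac{m(S)}{N},
\]
where $m(S)$ is the number of level-$k$ subtriangles touched by $S$: in each touched subtriangle the area contribution lies in $(0,1/N)$ and the point contribution is $0$ or $1$, so the per-subtriangle discrepancy is strictly less than $1/N$ in absolute value, and there is no cancellation bound to exploit beyond summing. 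Thus the whole problem reduces to the combinatorial-geometric claim
\[
\max_{S\in\cs_\pr} m(S) \le 2\cdot 2^k - 1 = 2\sqrt{N}-1.
\]

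**Counting touched subtriangles.** A parallelogram in $\cs_\pr$ has two of its sides parallel to two of the three edge directions of $\etri$, and those are exactly the directions of the grid lines in the level-$k$ subdivision. So each boundary side of $S$ is parallel to a family of grid lines; a side of $S$ crosses the grid lines of the \emph{other} two families. The subtriangles touched by $S$ are exactly those crossed by $\partial S$. I would bound, for a single line segment in one of the three grid directions, the number of subtriangles it can cross while lying within $\etri$: a segment parallel to one edge direction, at height giving a cross-section of length $t\ell$ (so it spans $t\cdot 2^k$ cells of size $\ell/2^k$ along that row), crosses at most $2\lceil t\,2^k\rceil-1 \le 2\cdot 2^k -1$ subtriangles, with the extreme achieved only by the full base row. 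The four sides of the parallelogram, however, come in two parallel pairs, and the two directions used are only two of the three grid directions; careful bookkeeping (each of the two non-base sides is shorter, being a chord of $\etri$, and the two sides in a parallel pair together cannot both achieve the full base-row bound since they are at different heights) should yield the total $\le 2\cdot 2^k-1$. This counting step — getting the constant exactly $2\sqrt N-1$ rather than, say, $4\sqrt N$ — is the main obstacle; it is the same kind of casework that Theorem~\ref{thm:vdcdiscrep} requires, and I expect it to reuse the geometry of that proof. A clean alternative worth trying: bound $m(S)$ by the number of touched \emph{upright} subtriangles plus inverted ones separately, and note a parallelogram side parallel to an edge passes alternately through upright and inverted cells, so the touched inverted cells number at most one fewer than the touched upright cells along that side; combined over the boundary this forces the $-1$.

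**Conclusion.** Combining the reduction with the counting bound gives $D_N^\pr(\cp;\etri) = \max_S|\delta_N(S;\cp,\etri)| \le (2\sqrt N-1)/N = 2/\sqrt N - 1/N$, and by the transformation invariance of $D_N^\pr$ the same holds for any nondegenerate $\Omega$. The bound is not claimed tight — and indeed Theorem~\ref{thm:vdcdiscrep} shows that for $s=1$ the true value is the much smaller $2/(3\sqrt N)-1/(9N)$ — so I would not attempt to optimize the constant beyond what the per-subtriangle argument freely gives.
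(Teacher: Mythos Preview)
Your approach is essentially identical to the paper's: decompose the signed discrepancy over the $N$ level-$k$ subtriangles, observe that each contributes at most $1/N$ in absolute value, and bound the number of subtriangles touched by the boundary of $S$ by $2\sqrt{N}-1$. You are in fact more careful than the paper on one point---the paper never explicitly verifies that for arbitrary starting index $s$ the points still land one per subtriangle, whereas you give the base-$4$ residue argument.

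Your one point of confusion is the ``four sides'' worry in the counting step. Two of the four sides of $\ct_{a,b,C}$ lie along the edges $CA$ and $CB$ of $\Omega$, hence on $\partial\Omega$; only the two far sides (the segments $L_a$ and $L_b$ meeting at the vertex $F$ opposite $C$) can touch interior subtriangles. So the counting is about two segments, not four, each parallel to one of the grid directions; the paper simply asserts that together they touch at most $2\sqrt{N}-1$ subtriangles (the detailed justification is implicit in the later case analysis of Section~3.2, where one sees that the touched subtriangles form at most $\sqrt{N}-1$ upright/inverted pairs plus one additional subtriangle). With that clarification your argument is complete and matches the paper's.
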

\begin{proof}
A set $S\in \cs_C$ can be written $S=\ct_{a,b,C}\cap \Omega$.
Let $T_j$ be the interiors of the subtriangles of $\Omega$ for $j=1,\dots,N$
and then let $T_0=\Omega\, \backslash \cup_{j=1}^NT_j$.
Now define $S_{j} = S\cap T_j$, $j=0,1,\dots,N$.
Then $\delta_N(S) = \sum_{j=0}^N\delta_N(S_j)$,
from~\eqref{eq:additive}.
Because the $\bsx_i$ are all interior points of their respective
subtriangles, we have $\delta_N(S_0)=0$.  If the boundary
of $\ct_{a,b,C}$ does not touch $S_j$ for $1\le j\le N$
then $\delta_N(S_j)=0$ too.  Otherwise $-1/N\le \delta_N(S_j)\le 1/N$.
Therefore $D_N(S;\cp)\le m/N$ where $m$ is the number of subtriangles
touching a boundary line of $\cs_{a,b,C}$. No such trapezoid can
have a boundary touching more than $2\sqrt{N}-1$ subtriangles.
Therefore
$D_N(\cs_C;\cp)\le (2\sqrt{N}-1)/N$ and since the same holds
for $\cs_A$ and  $\cs_B$, the theorem follows.
\end{proof}

\begin{theorem}\label{thm:anyn}
Let $\Omega$ be a non-degenerate triangle and, for integer $N\ge 1$, let
 $\cp = (\bsx_1,\dots,\bsx_N)$, where $\bsx_i = f_\Omega(i-1)$.
Then
$$D_N^\pr(\cp;\Omega) \le 12/\sqrt{N}.$$
\end{theorem}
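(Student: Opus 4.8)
The plan is to reduce the bound for an arbitrary $N$ to the already-established bound for blocks of size $4^k$ via the additive property \eqref{eq:additive} of the signed discrepancy. First I would write $N$ in base $4$, say $N = \sum_{j=0}^{m} c_j 4^j$ with $c_j \in \{0,1,2,3\}$ and $4^m \le N < 4^{m+1}$. The first $N$ van der Corput points decompose into consecutive blocks: $c_m$ blocks of $4^m$ points, then $c_{m-1}$ blocks of $4^{m-1}$ points, and so on. By construction each such block of $4^j$ consecutive points — because the van der Corput sequence cycles through the $4^j$ subtriangles at scale $j$ before moving on — is a set of centroids of $4^j$ subtriangles forming one of the translates/rearrangements of the standard decomposition, so Theorem~\ref{thm:any4tothek} applies to it with $N$ there equal to $4^j$. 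One must check that the starting index $s$ of each block is of the form (multiple of $4^j$) $+1$; this is exactly the digit-carry structure of base-$4$ counting, so it holds, and Theorem~\ref{thm:any4tothek} is stated precisely to allow an arbitrary start $s$.

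Next I would combine the pieces. For any $S \in \cs_{\pr}$, write $\delta_N(S;\cp) = \sum_{\text{blocks } B} \frac{|B|}{N}\,\delta_{|B|}(S;B)$, where I have rescaled because the discrepancy of a block is defined relative to its own size. Taking absolute values and using Theorem~\ref{thm:any4tothek} on each block gives
\[
D_N^{\pr}(\cp;\Omega) \;\le\; \frac1N \sum_{j=0}^{m} c_j\, 4^j \Bigl( \frac{2}{2^j} - \frac{1}{4^j}\Bigr)
\;\le\; \frac1N \sum_{j=0}^{m} c_j\, 2^{j+1}.
\]
Then I would bound $\sum_{j=0}^m c_j 2^{j+1} \le 2\cdot 3 \sum_{j=0}^m 2^j = 6(2^{m+1}-1) \le 6\cdot 2^{m+1} \le 12\cdot 2^m$, and since $4^m \le N$ we have $2^m \le \sqrt{N}$, so $2^m/N \le 1/\sqrt N$. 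This yields $D_N^{\pr}(\cp;\Omega) \le 12/\sqrt{N}$. (The crude constant $3$ on each $c_j$ is what produces the $12$; a sharper treatment of the top digit would improve it, but $12$ suffices.)

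The main obstacle I anticipate is the bookkeeping in the first paragraph: verifying cleanly that a block of $4^j$ consecutive van der Corput points starting at an index $\equiv 1 \pmod{4^j}$ is genuinely a set of $4^j$ subtriangle centroids to which Theorem~\ref{thm:any4tothek} applies — i.e. that within such a block the last $j$ base-$4$ digits range over all of $\{0,\dots,4^j-1\}$ while the higher digits stay fixed, pinning the points to one scale-$j$ subtriangle partition. This is a direct consequence of how the map $f_\Omega$ reads digits from least to most significant against the nested subdivision, but it deserves an explicit sentence. A secondary, minor point is handling small $N$ (e.g. $N$ not a multiple of $4$, or $N=1,2,3$) so that the leading-block argument and the inequality $4^m \le N$ are both valid; these are covered by taking $m = \lfloor \log_4 N\rfloor$ and noting the bound is vacuous or immediate there since $12/\sqrt N \ge 1 \ge D_N^\pr$ for $N \le 144$.
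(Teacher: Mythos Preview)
Your proposal is correct and follows essentially the same route as the paper: base-$4$ expansion of $N$, partition the first $N$ points into consecutive blocks of size $4^j$, apply Theorem~\ref{thm:any4tothek} to each block, and sum the resulting bounds using $c_j\le 3$ and $2^m\le\sqrt{N}$. One small remark: your ``main obstacle'' is not an obstacle at all, since Theorem~\ref{thm:any4tothek} is stated for an \emph{arbitrary} starting index $s\ge1$, so you need not verify that each block begins at an index congruent to $1\pmod{4^j}$; the paper simply invokes the block bound directly without that discussion.
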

\begin{proof}
Let $N = \sum_{j = 0}^K a_j 4^j$ for integer $K>0$, with $a_K\ne 0$.
Let $\cp_j^l$ 
denote a set of $4^j$ consecutive points 
from $\cp$, for $l = 1,\ldots, a_j$ and $j\le K$.
These $\cp_j^l$ can be chosen to partition the $N$ points $\bsx_i$. 
Fix any $S \in \cs_\pr$. Then,
\begin{align*}
\delta_N(S;\cp) =
\frac1N\sum_{j=0}^K\sum_{l=1}^{a_j} 4^j \delta(S;\cp_j^l) .
\end{align*}
Therefore from Theorem~\ref{thm:vdcdiscrep},
$$
D_N(S;\cp)=|\delta_N(S;\cp)| \le
\frac1N\sum_{j=0}^K\sum_{l=1}^{a_j} 4^j\Bigl( \frac{2}{2^j}-\frac1{4^j}\Bigr)
\le\frac1N\sum_{j=0}^Ka_j(2^{j+1}-1).
$$
Because $a_j\le 3$,
\begin{align*}
D_N(S;\cp) & \le
\frac3N\bigl( 2(2^{K+1}-1)-(K+1)\bigr)
\le \frac{12\times2^K}N
\end{align*}
and then $K\le \log_4(N)$,
gives $D_N(S;\cp) \le {12}/\sqrt{N}$.
Taking the supremum over $S\in \cs_\pr$ yields the result.
\end{proof}
Note that this bound can be improved by subtracting a multiple of
$\log(N)/N$ but that does not affect the rate.

If we apply the nested uniform
digit scrambling of  \cite{rtms} to the base $4$
digits of $i-1$, then $\bsx_i$ for $i=1,\dots,N=4^k$
are independent
and uniformly distributed within their subtriangles.
In that case, if $f$ has bounded first derivative
on $\etri$ then $$\e\Bigl( \Bigl( 
\frac1N\sum_{i=1}^nf(\bsx_i)
-\int_\etri f(\bsx)\rd\bsx\Bigr)^2\Bigr) = O\Bigl( \frac1{N^2}\Bigr)$$
because the subtriangles have diameter $O(1/\sqrt{N})$.

\subsection{Proof of Theorem~\ref{thm:vdcdiscrep}}\label{sec:proveit}
The parallel discrepancy is the same for all nondegenerate
triangles, so we will work with $\Omega=\etri$.
By symmetry of the construction
$$D_N^\pr(\cp;\Omega)
= D_N(\cs_A;\cp,\Omega)
= D_N(\cs_B;\cp,\Omega)
= D_N(\cs_C;\cp,\Omega),$$
and so it suffices to study $D_N(\cs_C;\cp,\etri)$.

The sets in $\cs_C$ are of the form
$S_{a,b}\equiv\ct_{a,b}\cap\etri$
where $0<a\le\ell$ and $0<b\le \ell$,
as depicted in Figure~\ref{fig:brandotabc}.
The trapezoid $\etri = CDFE$ has a horizontal upper
boundary line segment $L_b\equiv DF$ and a lower slanted boundary 
line segment $L_a\equiv EF$.

The case with $N=1$ can be solved easily.  
It corresponds to the infimal area of a parallelogram
containing the centroid of $\etri$. From here on we
assume $N=4^k$ for $k\ge1$.

We will use the decomposition
of $\etri$ into $N=4^k$ congruent equilateral subtriangles each of
area $1/N$.
Of these, there are $2^k(2^k+1)/2$ upright
subtriangles, $2^k(2^k-1)/2$ inverted triangles
and $\cp$ places one point at the centroid of
these $4^k$ subtriangles.

Recall that a line segment `touches' a triangle if it
intersects an interior point of that triangle.
We say that the line segment `crosses'
a triangle if touches it and also intersects two points on the
boundary of the triangle.
If neither $L_a$ nor $L_b$ touch $\etri$, then
$\delta_N(S_{a,b};\cp)=0$.
If $L_b$ touches $\etri$ and $L_a$ does not, then
$S_{a,b}$ is the subset of $\etri$ below a horizontal
line and we easily find that the greatest discrepancy
for this case is 
\begin{align}\label{eq:emptytrap}
\sup_{0\le b<\ell}D_N(S_{\ell,b}) = 
\dfrac2{3\sqrt{N}}-\dfrac1{9N}
\end{align}
attained when $L_b$ passes just below the centroids
of the bottom row of upright triangles.  Such a line
contains $0$ points of $\cp$ and its volume is given
by~\eqref{eq:emptytrap}.
By symmetry $\sup_{0\le a<\ell}D_N(S_{a,\ell})$ takes the
same value.

It remains to consider the
case where both $L_a$ and $L_b$ touch $\etri$.
In this case, to maximize discrepancy, the 
horizontal line $L_b$ must either pass
just above a row of upright subtriangle's centroids, just below
such a row, or just above or below a row of inverted 
subtriangle's centroids. Similarly the slanted line $L_a$
must pass just left or just right of a slanted row of centroids,
or else discrepancy can be increased.

There are 4 cases. The intersection of $L_a$ and $L_b$
could be inside an upright subtriangle, inside
an inverted subtriangle, outside of $\etri$
touching two disjoint bands of subtriangles, or outside
of $\etri$ touching two bands of one or more subtriangles
that share an upright subtriangle.
These cases are illustrated in Figure~\ref{fig:options}.

\begin{figure}
\centering
\includegraphics[width=.8\hsize]{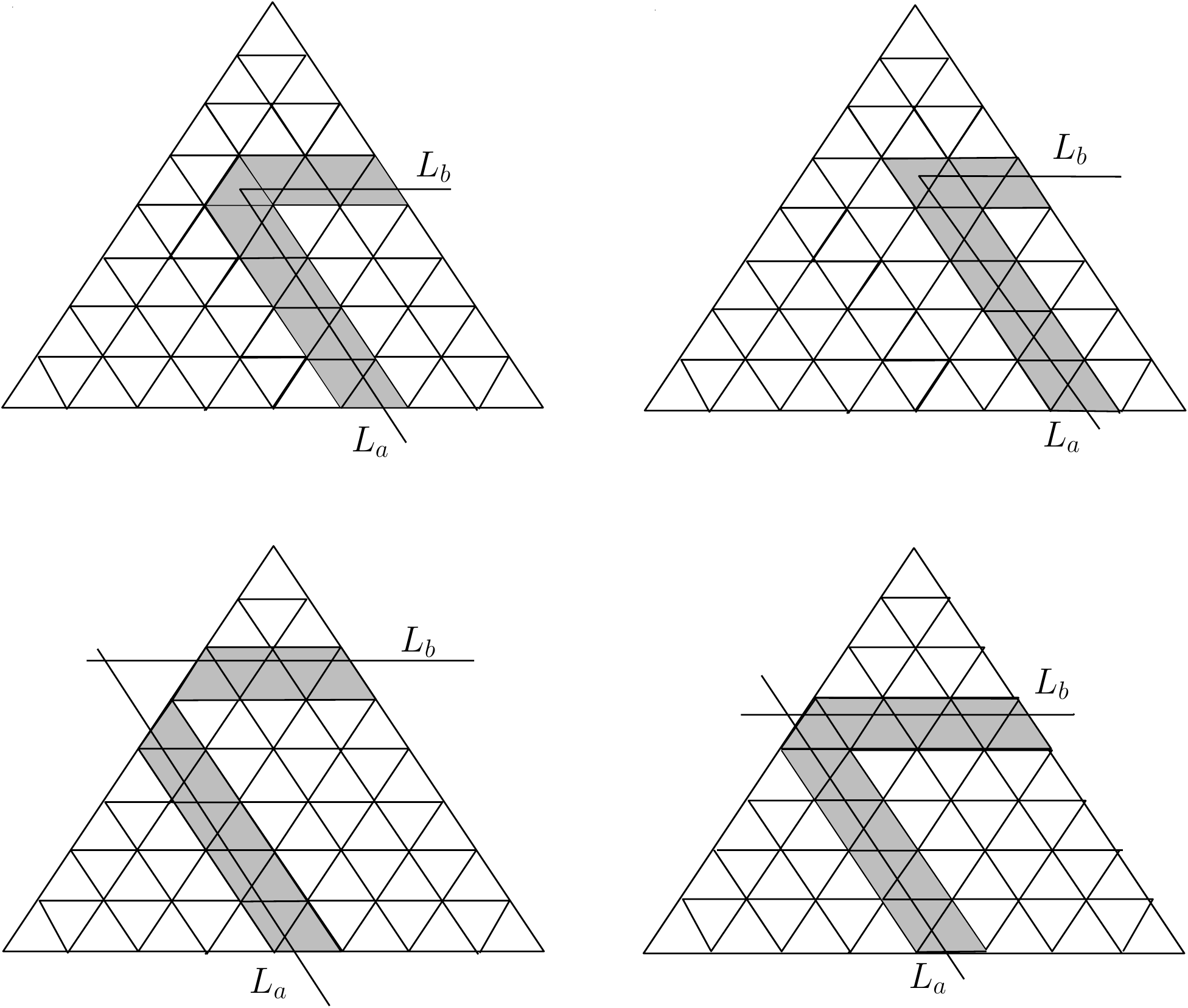}
\caption{\label{fig:options}
This figure illustrates the four cases that can
arise when both $L_a$ and $L_b$ touch $\etri$.
}
\end{figure}

As in Theorem~\ref{thm:anyn}, the signed discrepancy
$\delta_N(\cdot)$ can be summed over the subtriangles.
The cases in Figure~\ref{fig:options} include subtriangles
touched by $0$, $1$ or $2$ of the boundary lines.
A subtriangle  touched by $0$ boundary lines does not
contribute to the discrepancy.

Suppose that the upright subtriangle $T$ is crossed
by one horizontal line passing just above the
centroid of an inverted triangle to the left or right of $T$.
Referring to Figure~\ref{fig:trap} we see that
$8/9$ of the area of that upright triangle is below
the line as is its one point. As a result, the signed discrepancy
contribution $\delta_N(\cdot)$  for that subtriangle
is $1-8/9=1/9$ of the area of this triangle, that is
$1/(9N)$.
Similarly, the portion of an inverted subtriangle 
below that line has $4/9$ of the area and also the one and only point,
for a signed discrepancy contribution of $5/(9N)$.
These two facts are recorded in the first row
of Table~\ref{tab:triangledisc}.  The three other relevant
horizontal lines are also summarized in Table~\ref{tab:triangledisc}.

\begin{figure}
\centering
\includegraphics[width=.8\hsize]{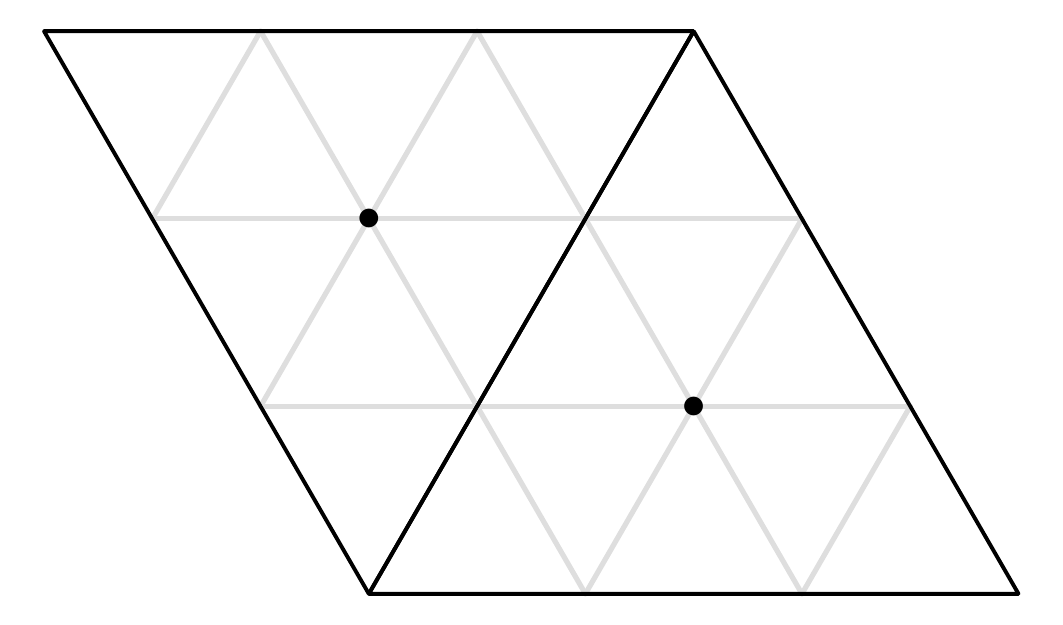}
\caption{\label{fig:trap}
This figure shows a trapezoid made up of one
upright subtriangle and one inverted subtriangle.
Each subtriangle has area $1/N$ and contains $1$
point of $\cp$ at its centroid, as shown.
}
\end{figure}

\begin{table}
\newcommand{\spx}{.8ex}
\centering
\begin{tabular}{c|ccccccc}
& \multicolumn{3}{c}{Upright} 
& \multicolumn{3}{c}{Inverted}
& \multicolumn{1}{c}{Total}\\
Horiz. Line& pts & vol & $\delta_u$ 
& pts & vol& $\delta_i$ 
&$\delta_u+\delta_i$\\[\spx]
\hline\\[-1.8ex]
Inv $+$ & $1$ &$8/9$ & $\phm1/9$ & $1$ & $4/9$ & $\phm5/9$ & $\phm2/3$
\\[\spx]
Inv $-$ & $1$ & $8/9$ & $\phm1/9$ & $0$ & $4/9$ & $-4/9$ & $-1/3$
\\[\spx]
Upr $+$ & $1$ & $5/9$ & $\phm4/9$& $0$ & $1/9$ & $-1/9$& $\phm1/3$
\\[\spx]
Upr $-$ & $0$& $5/9$ & $-5/9$& $0$ & $1/9$ & $-1/9$& $-2/3$
\\[\spx]
\hline
\end{tabular}
\caption{\label{tab:triangledisc}
Signed discrepancies for
subtriangles crossed horizontally by $L_b$
and not touched by $L_a$.
Line Inv$+$ passes
just above the centroid of the inverted subtriangle, Inv$-$
passes just below it. Upr$\pm$ are similarly defined with
respect to the centroid of the upright subtriangle.
For each subtriangle we record the number of centroid
points and the fraction of its volume below each line
as well as $N$ times the signed discrepancy contribution of that subtriangle
and the total signed discrepancy of the trapezoid they form.
}
\end{table}

Also in that table, we see the total discrepancy
of two triangles, one upright and one inverted,
when they are both crossed by the same horizontal line.
These subtrapezoids play an important role in the analysis.
The signed discrepancy contribution of a subtrapezoid can be has high
as $2/(3N)$ when the line crosses just above the
centroid of the inverted triangle, and as low as $-2/(3N)$
when it passes just below the centroid of the
upright triangle.  
The same discrepancies hold for triangles crossed
by  the slanted line $L_a$ intersecting the base of $\etri$
at distance $a$ from~$C$, where as before $\pm$ indicates
values of $a$ just barely including/excluding
a subtriangle's centroid.

Now consider a subtriangle touched by both lines
$L_a$ and $L_b$.
We can see from Figure~\ref{fig:trap}
that if an inverted subtriangle is touched
by both lines then they must have met at its centroid.
The signed discrepancy from that triangle is then $8/(9N)$
if both lines included the centroid and $-1/(9N)$ 
if either excluded it.

\begin{table}
\newcommand{\spx}{2.4ex}
\centering
\begin{tabular}{c|cccccc}
& \multicolumn{5}{c}{Slanted line $L_a$}\\
Horiz. Line& Inv$+$ & Inv$-$ & Up$+$ & Up$-$ & Right Trap.\\[\spx]
\hline\\[-.8ex]
Inv$+$ &$\phm\dfrac2{9N}$ & $\phm\dfrac2{9N}$ & 
     $\phm\dfrac5{9N}$ & $-\dfrac4{9N}$ &$\dfrac{2}{3N}$ 
\\[\spx]
Inv$-$ & $\phm\dfrac2{9N}$ & $\phm\dfrac2{9N}$ & $\phm\dfrac{5}{9N}$
&$-\dfrac4{9N}$&$-\dfrac{1}{3N}$ \\[\spx]
Upr$+$ & $\phm\dfrac5{9N}$ & $\phm\dfrac5{9N}$& $\phm\dfrac7{9N}$ 
&$-\dfrac2{9N}$&$\dfrac{1}{3N}$ 
\\[\spx]
Upr$-$ & $-\dfrac4{9N}$ & $-\dfrac4{9N}$& $-\dfrac2{9N}$ 
&$-\dfrac2{9N}$&$-\dfrac{2}{3N}$ \\[\spx]
\hline\\[-.4ex]
Lower Trap. & $\dfrac2{3N}$ & $-\dfrac1{3N}$& $\dfrac1{3N}$ 
&$-\dfrac2{3N}$& \\[\spx]
\hline
\end{tabular}
\caption{\label{tab:upright}
Signed discrepancies for an upright
subtriangle $T$ touched by $L_a$ and $L_b$.
Rows designate $4$ relevant horizontal lines,
columns the slanted lines.
The main table
shows the signed discrepancy of $T$. The
rightmost column shows the signed discrepancy
of trapezoids to the
right of $T$. The bottom row shows the signed discrepancy
of trapezoids below $T$.
}
\end{table}

If both boundary lines touch an upright subtriangle $T$
those lines can meet just above or just below $T$'s centroid,
or they can meet just above or just below the centroid
of an inverted subtriangle to the left of $T$.
Table~\ref{tab:upright} enumerates the cases along with
their signed discrepancies, the signed discrepancies
of any trapezoids to the right of $T$, and the
signed discrepancies of trapezoids below (and right) of~$T$.

Now we consider our four cases.
First, if $L_a\cap L_b$ is in an upright subtriangle $T$
then $T$ is the only subtriangle touched by two 
lines. The total signed discrepancy is that from
within $T$ together with as many as $\sqrt{N}-1$
trapezoids. For $N=1$ there were none of
these trapezoids, but for
$N\ge4$, at least $3$ trapezoids can contribute. 
Referring to Table~\ref{tab:upright}, we see
that maximizing the contribution from trapezoids
will maximize the discrepancy irrespective
of the signed discrepancy from $T$. We maximize
discrepancy by finding the largest possible
$|\delta_N(T)|$ for which either the trapezoids
in its row or column have $\delta_N=\pm2/(3N)$
with a sign matching $\delta_N(T)$.
The result using $\sqrt{N}-1$ such trapezoids gives
discrepancy
$$
\delta_N(S_{a,b}) = \frac{5}{9N} + (\sqrt{N}-1)\frac{2}{3N}
= 
\frac{2}{3\sqrt{N}}-\frac{1}{9N},
$$
tying the discrepancy~\eqref{eq:emptytrap} from the
large empty region below all the $\bsx_i$.

The second case has $L_a\cap L_b$ in an inverted subtriangle.
There is always an upright subtriangle to the right of
an inverted one, both of those subtriangles are
touched by $L_a$ and $L_b$, and no others are touched by two of
these lines.
The inverted triangle has signed discrepancy 
$-1/(9N)$ or $8/(9N)$ and
the upright triangle to its right has signed discrepancy $2/(9N)$
for a total of $1/(9N)$ or $10/(9N)$.
There can be as many as $\sqrt{N}-2$
parallelogram pairs contributing to the total
discrepancy which cannot therefore exceed
$$
\frac{10}{9N} + (\sqrt{N}-2)\frac{2}{3N} = \frac{2}{3\sqrt{N}}-\frac{2}{9N}.
$$
As a result, the second case cannot maximize discrepancy.

The third case has $L_a$ and $L_b$ intersecting
outside $\etri$ and touching
two bands of parallelograms that intersect in one
upright triangle $T$. The greatest possible discrepancy
here arises from $\sqrt{N}-1$ trapezoids and one upright
triangle.  
This is the same configuration as in case 1 and hence
cannot exceed~\eqref{eq:emptytrap} either.

The fourth and final case has $L_a$ and $L_b$ touching
two bands of parallelograms that don't intersect.
As a result there are at most $\sqrt{N}-2$ parallelograms
contributing to the discrepancy along with $2$
upright triangles touched by one line each.
The greatest absolute discrepancy attainable this way is thus
$$
(\sqrt{N}-2)\frac{2}{3N} + 2\times \frac{4}{9N}
=\frac{2}{3\sqrt{N}} -\frac{4}{9N}.
$$
Having exhausted the cases, we conclude that for $N=4^k>1$,
$D_N^\pr(\cp)=(2/3\sqrt{N})-1/(9N)$.
$\Box$

\section{Triangular Kronecker Lattices}\label{sec:trikron}
In this section we use Theorem 1 of \cite{Chen2007662} to construct points in the triangle with a parallel discrepancy of $O({\log (N)}/{N})$. The construction is through a suitably scaled copy of the lattice $\mathbb{Z}^2$
rotated through an angle.
The chosen angle makes tangents of certain angles badly
approximable in the same way that Kronecker sequences 
use badly approximable numbers for sampling of the unit cube 
\citep{larc:nied:1993}.
We begin with some definitions.

\begin{definition}
A real number $\theta$ is said to be \textit{badly approximable} if there exists a constant $c > 0$ such that $n || n\theta|| > c $ for every natural number $n \in \mathbb{N}$ and $|| \cdot ||$ denotes the distance from the nearest integer. 
\end{definition}

\begin{definition}
Let $a$, $b$, $c$ and $d$ be integers with
$b\ne0$, $d\ne0$ and $c>0$, where $c$ is not a perfect square.
Then $\theta = (a+b\sqrt{c})/d$ is a
\textit{quadratic irrational number}.
\end{definition}

Quadratic irrational numbers have a periodic repeating
continued fraction representation, and they
are badly approximable
\citep{hens:2006}.


Let $\Theta = \{\theta_1,\dots,\theta_k\}$ be a set
of $k\ge1$ angles in $[0,2\pi)$.
Then let $\ca(\Theta)$ be the set of convex polygonal
subsets of $[0,1]^2$ whose sides make an angle of
$\theta_i$ with respect to the horizontal axis.
Theorem 1 of \cite{Chen2007662} says that there
exists a constant $C_\Theta<\infty$ such that for
any integer $N>1$ there exists a list $\cp=(\bsx_1,\dots,\bsx_N)$
of points in $[0,1]^2$ with
\begin{align}\label{eq:lowdiscpts}
D_N(\ca(\Theta);\cp,[0,1]^2) < C_\Theta \log(N)/N.
\end{align}
Their proof of Theorem 1 relies on this lemma:

\begin{lemma}\label{lem:chen2.2}
Suppose that the angles $\theta_1,\dots,\theta_k\in[0,2\pi)$
are fixed. Then there exists $\alpha\in[0,2\pi)$
such that
$\tan(\alpha), \tan(\alpha-\pi/2), \tan(\alpha-\theta_1),\dots
\tan(\alpha-\theta_k)$ are all finite and badly
approximable.
\end{lemma}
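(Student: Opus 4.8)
The plan is to produce $\alpha$ by a soft ``large intersection'' argument, using Schmidt's game together with the fact that the set $\mathbf{BA}$ of badly approximable reals is a winning set for it. First I would enlarge the given list to $\Theta' = \{0,\pi/2,\theta_1,\dots,\theta_k\}$, so that the quantities $\tan\alpha$, $\tan(\alpha-\pi/2)$, $\tan(\alpha-\theta_1),\dots,\tan(\alpha-\theta_k)$ are exactly the numbers $g_\phi(\alpha) := \tan(\alpha-\phi)$ as $\phi$ ranges over $\Theta'$. For each fixed $\phi$, the map $g_\phi$ is real-analytic with derivative $\sec^2(\alpha-\phi)\ge 1$ away from the discrete pole set $\{\phi+\pi/2+\pi m : m\in\mathbb{Z}\}$. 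Since $\Theta'$ is finite, I can choose a nondegenerate closed interval $I\subset(0,2\pi)$ avoiding all of these poles simultaneously; on $I$ each $g_\phi$ is then a bi-Lipschitz $C^1$ diffeomorphism onto its image $g_\phi(I)$.

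Next I would invoke the classical theorem of Schmidt that $\mathbf{BA}$ is a winning set for Schmidt's game on the line, restricted to any interval, together with the standard stability properties of winning sets: a winning set is nonempty (in fact dense and of full Hausdorff dimension); a bi-Lipschitz image of a winning set is winning; and a countable intersection of winning sets is winning. For each $\phi\in\Theta'$, the set $\mathbf{BA}\cap g_\phi(I)$ is winning for the game on $g_\phi(I)$, hence its preimage $W_\phi := \{\alpha\in I : \tan(\alpha-\phi)\in\mathbf{BA}\} = g_\phi^{-1}(\mathbf{BA}\cap g_\phi(I))$ is a winning subset of $I$, being the bi-Lipschitz image of $\mathbf{BA}\cap g_\phi(I)$ under $g_\phi^{-1}$. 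By the intersection property, $W := \bigcap_{\phi\in\Theta'} W_\phi$ is winning, hence nonempty. Any $\alpha\in W$ works: for every $\phi\in\Theta'$, $\tan(\alpha-\phi)\in\mathbf{BA}$, so it is irrational (hence finite) and badly approximable; taking $\phi\in\{0,\pi/2,\theta_1,\dots,\theta_k\}$ gives the claim.

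The main thing to get right is the bookkeeping around Schmidt's game: citing the invariance-under-bi-Lipschitz (equivalently $C^1$-diffeomorphism) statement and the countable-intersection property in a form valid on a bounded interval, and ensuring the game parameters can be taken uniformly over the finitely many $\phi$. This last point is automatic if one uses McMullen's stronger notion of an \emph{absolutely winning} set, which $\mathbf{BA}$ also is and which is preserved by $C^1$ diffeomorphisms and closed under countable intersections with no parameter tracking. Everything else is routine: the pole count, the derivative bound giving bi-Lipschitzness on $I$, and the step from ``lies in $\mathbf{BA}$'' to ``finite and badly approximable.'' If one prefers not to adjoin $\pi/2$ to $\Theta'$, one may instead note $\tan(\alpha-\pi/2)=-1/\tan\alpha$ and that $\mathbf{BA}$ is closed under $x\mapsto -1/x$, which is immediate from the bounded-partial-quotients characterization; but treating $\pi/2$ like the other angles is cleaner. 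A more hands-on alternative would build $\alpha$ directly by intersecting, after pulling back through the $g_\phi$, the Cantor sets of reals with bounded partial quotients --- but that essentially re-derives the needed special case of Schmidt's machinery, so the game is the more economical route.
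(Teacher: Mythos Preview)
Your argument via Schmidt's game is correct. The paper does not actually prove this lemma: its entire ``proof'' is the single sentence ``This is Lemma 2.2 of \cite{Chen2007662},'' deferring to Chen and Travaglini. So there is no in-paper argument to compare against; you have supplied a genuine proof where the paper only gives a citation.

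On the substance: your reduction is the standard one. Restricting to a compact interval $I$ avoiding the finitely many poles makes each $g_\phi(\alpha)=\tan(\alpha-\phi)$ a $C^1$ diffeomorphism with derivative bounded between $1$ and $\max_{\alpha\in I}\sec^2(\alpha-\phi)<\infty$, hence bi-Lipschitz. Schmidt's theorem that $\mathbf{BA}$ is winning, together with invariance of the winning property under bi-Lipschitz maps and closure under (countable, hence finite) intersection, then yields a nonempty $W$. Your remark about parameter uniformity is on point: for finitely many maps with a common bi-Lipschitz constant one can track the Schmidt parameter by hand, and invoking McMullen's absolute winning is a clean way to avoid that bookkeeping altogether. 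Either route closes the argument. The alternative you mention---using $\tan(\alpha-\pi/2)=-\cot\alpha$ and closure of $\mathbf{BA}$ under $x\mapsto -1/x$---is exactly what the paper exploits in its Lemma~\ref{lem:qirok} for the explicit construction, but adjoining $\pi/2$ to $\Theta'$ as you do is indeed the tidier choice for the existence statement.
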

\begin{proof}
This is Lemma 2.2 of \cite{Chen2007662}.
\end{proof}

Given an $\alpha$ as described by Lemma~\ref{lem:chen2.2},
they construct a list  of $N$ points in $[0,1]^2$
satisfying~\eqref{eq:lowdiscpts}.
To obtain their points they take the lattice
$N^{-1/2}\ints^2$ and rotate it through an angle $\alpha$
anticlockwise about the origin, and retain only
those points which lie in $[0,1]^2$. The result
will not necessarily have $N$ points, but by
adding or removing $O(\log(N))$ points 
they arrive at a set $\cp$ of $N$ points in $[0,1]^2$.

To apply their method, we will place points inside
the right angle triangle
\begin{align}\label{eq:righttri}
R = \Delta((0,0)^\tran,(0,1)^\tran ,(1,0)^\tran).
\end{align}
The sides of the
parallelograms of the form $\ct_{a,b,C}$, $\ct_{a,c,B}$
and $\ct_{b,c,A}$ for triangle $R$ make angles
$0$, $\pi/2$ and $3\pi/4$ (and no others) with respect to the horizontal axis.
Intersecting any of those parallelograms with $R$ always yields
a convex polygon whose sides make an angle of $0$, $\pi/2$ or $3\pi/4$ with
the horizontal axis. Lemma~\ref{lem:qirok} supplies for this set
of angles some choices for the  $\alpha$
whose existence is asserted by Lemma~\ref{lem:chen2.2}.

\begin{lemma}\label{lem:qirok}
Let $\alpha$ be an angle for which 
$\tan(\alpha)$ is a quadratic
irrational number. Then
$\tan(\alpha)$,  $\tan(\alpha-\pi/2)$
and $\tan(\alpha-3\pi/4)$ are all finite and badly approximable.
\end{lemma}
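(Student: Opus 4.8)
The plan is to reduce everything to the single hypothesis that $\tan(\alpha)$ is a quadratic irrational, and then show that the other two tangents are obtained from it by a M\"obius transformation with rational coefficients, hence are themselves quadratic irrationals, hence badly approximable by the fact quoted just before the lemma. First I would write $t = \tan(\alpha)$, which by hypothesis is of the form $(a+b\sqrt c)/d$ with $b,d\ne 0$ and $c>0$ not a perfect square; in particular $t$ is irrational, so $\alpha$ is not a multiple of $\pi/2$ and all three tangents in the statement are finite (the angle-subtraction formulas below never divide by zero). Then I would apply the standard identities $\tan(\alpha-\pi/2) = -\cot(\alpha) = -1/t$ and $\tan(\alpha - 3\pi/4) = (t - \tan(3\pi/4))/(1 + t\tan(3\pi/4)) = (t+1)/(1-t)$, using $\tan(3\pi/4) = -1$.

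Next I would verify that $-1/t$ and $(t+1)/(1-t)$ are again quadratic irrationals. The cleanest way is the field-theoretic one: the set $\mathbb{Q}(\sqrt c)$ is a field, it contains $t$, and a real number lies in some $\mathbb{Q}(\sqrt{c'})$ with $c'$ not a perfect square if and only if it is a quadratic irrational (i.e. irrational but algebraic of degree $2$ over $\mathbb{Q}$). Since $-1/t \in \mathbb{Q}(\sqrt c)$ and $(t+1)/(1-t)\in\mathbb{Q}(\sqrt c)$ (note $1-t\ne 0$ because $t$ is irrational), each is either rational or a quadratic irrational; and each is irrational because, e.g., $-1/t$ rational would force $t$ rational, and $(t+1)/(1-t) = q \in \mathbb{Q}$ would give $t = (q-1)/(q+1)\in\mathbb{Q}$ (with $q\ne -1$ since $t\ne\infty$). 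Alternatively, and perhaps more in keeping with the paper's elementary definition, I would just exhibit explicit integer coefficients: writing $t=(a+b\sqrt c)/d$, rationalize to get $-1/t = -d(a-b\sqrt c)/(a^2-b^2c) = (-da + db\sqrt c)/(a^2-b^2 c)$, check the denominator $a^2-b^2c$ is a nonzero integer (nonzero since $\sqrt c$ irrational) and $db\ne 0$, and similarly expand $(t+1)/(1-t)$, multiplying numerator and denominator through by $d$ and then rationalizing. Both then match the Definition of quadratic irrational verbatim (possibly after reducing a common factor, which does not affect the form).

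Finally I would invoke the cited fact (Hensley, via the remark preceding the lemma) that every quadratic irrational is badly approximable, concluding that $\tan(\alpha)$, $\tan(\alpha-\pi/2)$ and $\tan(\alpha-3\pi/4)$ are all finite and badly approximable, which is exactly the statement. I do not expect a genuine obstacle here; the only mild care needed is bookkeeping — confirming no denominator vanishes (which all follows from $t$ irrational) and confirming that the resulting expressions genuinely have the required integer-coefficient form rather than collapsing to a rational. If one wanted to avoid even the field-theory shortcut, the explicit-coefficient computation is a few lines and is the safest route for a self-contained argument.
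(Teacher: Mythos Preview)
Your proposal is correct and follows essentially the same route as the paper: write $t=\tan(\alpha)=(a+b\sqrt c)/d$, use the identities $\tan(\alpha-\pi/2)=-1/t$ and $\tan(\alpha-3\pi/4)=(1+t)/(1-t)$, rationalize to exhibit each as a quadratic irrational, and invoke the cited fact that quadratic irrationals are badly approximable. Your field-theoretic shortcut via $\mathbb{Q}(\sqrt c)$ is a tidy alternative the paper does not mention, and your finiteness check (no denominator vanishes because $t$ is irrational) is slightly more explicit than the paper's.
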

\begin{proof}
Write $\tan(\alpha) = (a+b\sqrt{c})/d$ 
for integers $a$, $b$, $c$ and $d$
satisfying $b\ne0$, $d\ne0$, and $c>0$,
with $c$ not the square of an integer.
First, $\tan(\alpha)$ is badly approximable
(and finite) because it is quadratic irrational.  Similarly,
$$\tan(\alpha-\pi/2) =  -\cot(\alpha) = 
\frac{d}{a+b\sqrt{c}} = \frac{da-bd\sqrt{c}}{a^2-b^2c}$$
is finite and badly approximable. Note that the denominator
in $\tan(\alpha-\pi/2)$  is not zero because
$c$ is not a perfect square, and $bd\ne 0$ too.
Finally,
$$\tan(\alpha-3\pi/4) = 
\frac{1+\tan(\alpha)}{1-\tan(\alpha)}
=
\frac{1+\frac{a+b\sqrt{c}}d}
{1-\frac{a+b\sqrt{c}}d}
=
\frac{(d-a)^2+b^2c + 2bd\sqrt{c}}{(d-a)^2-b^2c}
$$
is also finite and badly approximable. 
\end{proof}

As an example, $\tan(3\pi/8) = 1+\sqrt{2}$
is a quadratic irrational.
Therefore the angle $\alpha = 3\pi/8$ satisfies the
conditions of Lemma~\ref{lem:qirok}.

\begin{theorem}
Let $N>1$ be an integer and
let $R$ be the triangle given by \eqref{eq:righttri}.
Let $\alpha\in(0,2\pi)$ be an angle for which
$\tan(\alpha)$ is a quadratic irrational.
Let $\cp_1$ be the points of the lattice $(2N)^{-1/2}\ints^2$
rotated anticlockwise by angle $\alpha$.
Let $\cp_2$ be the points of $\cp_1$ that lie in $R$.
If $\cp_2$ has more than $N$ points, let $\cp_3$
be any $N$ points from $\cp_2$, or if $\cp_2$ has fewer
than $N$ points, let $\cp_3$ be a list of $N$ points
in $R$ including all those of $\cp_2$.
Then there is a constant $C$ with
$$D^\pr(\cp_3;R) < C\log(N)/N.$$
\end{theorem}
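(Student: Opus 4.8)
The plan is to reduce the problem about the triangle $R$ to the cube result of Chen and Travaglini (equation~\eqref{eq:lowdiscpts}) for a suitable finite set of angles, and then to account for the discrepancy lost when passing from $\cp_1$ to $\cp_3$. First I would identify the relevant angle set. As noted just before Lemma~\ref{lem:qirok}, for the triangle $R$ from~\eqref{eq:righttri} every parallelogram of the form $\ct_{a,b,C}$, $\ct_{a,c,B}$, $\ct_{b,c,A}$ has sides at angles $0$, $\pi/2$, $3\pi/4$ only, and intersecting such a parallelogram with $R$ (whose own edges lie at those same angles) yields a convex polygon with sides at angles in $\Theta=\{0,\pi/2,3\pi/4\}$. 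Hence $\cs_\pr$ for $R$, with each set intersected with $R$, is a subfamily of $\ca(\Theta)$ up to the normalization by $\vol(R)=1/2$ rather than $1$; since $\vol(R\cap\Omega)/\vol(R)$ versus $\vol(S\cap[0,1]^2)$ differ only by the constant factor $2$ and by restricting attention to $S\subseteq R$, the parallel discrepancy $D_N^\pr(\cp;R)$ is bounded by a constant times $D_N(\ca(\Theta);\cp,[0,1]^2)$ for any point set $\cp\subset R$.

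Next I would verify that the rotated lattice construction is exactly the one feeding Chen--Travaglini's Theorem~1. Lemma~\ref{lem:chen2.2} guarantees an angle $\alpha$ making $\tan(\alpha),\tan(\alpha-\pi/2),\tan(\alpha-3\pi/4)$ finite and badly approximable, and Lemma~\ref{lem:qirok} shows that any $\alpha$ with $\tan(\alpha)$ quadratic irrational (e.g. $\alpha=3\pi/8$) does the job; so the hypothesis on $\alpha$ in the theorem is precisely what the Chen--Travaglini machinery needs. Taking the lattice $(2N)^{-1/2}\ints^2$ rotated by $\alpha$ and intersecting with $[0,1]^2$ gives, by their argument, a point set whose cardinality is $N'$ with $|N'-N|=O(\log N)$ and whose discrepancy with respect to $\ca(\Theta)$-sets is $O(\log N/N)$ after the usual $O(\log N)$ point adjustments. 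I would then restrict this whole picture to $R\subset[0,1]^2$: $\cp_2=\cp_1\cap R$ has $M$ points with $M$ close to $N'/2$ (again up to $O(\log N)$, since the number of rotated lattice points in a fixed convex region of area $1/2$ at scale $(2N)^{-1/2}$ is $N/2+O(\sqrt{N})$, and in fact one gets the finer $O(\log N)$ bound from the same badly-approximable-slope estimates applied to the triangle's edges).

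The remaining step is bookkeeping on the three point-count corrections: from $\cp_1\cap[0,1]^2$ to an exactly-$N$-point set in the cube (their step), from that to $\cp_2=\cp_1\cap R$, and from $\cp_2$ to the exactly-$N$-point set $\cp_3\subset R$. Each correction changes the count over any measurable set by at most the number of points added or removed, so it perturbs $\delta_N(S;\cp,R)$ by $O(\log N)/N$; combining with the $O(\log N/N)$ discrepancy of the unadjusted rotated lattice over $\ca(\Theta)$-sets and the constant factor from the $R$-normalization yields $D^\pr(\cp_3;R)<C\log(N)/N$ for an absolute constant $C$ depending only on $\alpha$. I expect the main obstacle to be the second point-count estimate: showing that the number of rotated lattice points falling strictly inside $R$ differs from its expectation $N/2$ by only $O(\log N)$ rather than the crude $O(\sqrt N)$. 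This is exactly the kind of statement that Chen--Travaglini's badly-approximable-slope lattice-point-counting lemmas deliver — the slopes $0$, $\pi/2$, $3\pi/4$ of $R$'s edges, rotated by $\alpha$, are the badly approximable tangents furnished by Lemma~\ref{lem:qirok} — so the fix is to invoke their counting lemma once more for the triangle's boundary rather than to reprove it; I would state this carefully and cite Theorem~1 of \cite{Chen2007662} as encompassing it, since their construction already keeps exact control of cardinalities along polygonal boundaries with such slopes.
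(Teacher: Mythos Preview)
Your outline is essentially the paper's argument, but there is an arithmetic slip that, as written, would break it. The lattice $(2N)^{-1/2}\ints^2$ has density $2N$ points per unit area, not $N$; hence $\cp_1\cap[0,1]^2$ contains about $2N$ points (this is the ``$2N$'' to which Chen--Travaglini's Theorem~1 is applied), and $\cp_2=\cp_1\cap R$ contains about $N$ points because $\vol(R)=1/2$, not about $N/2$ as you write. This matters: if $\cp_2$ really had $\sim N/2$ points, adjusting to exactly $N$ would cost $\Theta(N)$ insertions and the discrepancy bound would collapse to $O(1)$. Once you correct the density, your bookkeeping goes through.

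Second, what you flag as the ``main obstacle'' --- showing $\bigl|\,|\cp_2|-N\,\bigr|=O(\log N)$ rather than the crude $O(\sqrt{N})$ --- dissolves once you notice that $R$ itself lies in $\ca(\Theta)$ for $\Theta=\{0,\pi/2,3\pi/4\}$. This is precisely the paper's device: applying the Chen--Travaglini bound $D_{2N}(\ca(\Theta);\cp_1,[0,1]^2)<C_\Theta\log(2N)/(2N)$ to the single test set $S=R$ yields $\bigl|\,|\cp_2|/(2N)-1/2\,\bigr|<C_\Theta\log(2N)/(2N)$, i.e.\ $\bigl|\,|\cp_2|-N\,\bigr|<C_\Theta\log(2N)$. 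No separate lattice-point-counting lemma is required; the discrepancy bound already encodes the point count you need. The same observation handles your ``three corrections'' in one stroke, since every set in $\cs_\pr$ (intersected with $R$) and $R$ itself all sit inside $\ca(\Theta)$.
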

\begin{proof}
By Lemma~\ref{lem:qirok}, the angle $\alpha$ in the
hypothesis of this theorem satisfies the conditions
required by the construction of \cite{Chen2007662}.
We may therefore use that construction to get $2N$
points $\cp_1$ in $[0,1]^2$ such that their Theorem
1 yields $D_{2N}(\cp_1;\ca(\Theta)) < C_\Theta \log(2N)/(2N)$
where $\Theta = \{0,\pi/2,3\pi/4\}$.
Because the set $R\in \ca(\Theta)$ and has area
$1/2$, we know that the number of points in $\cp_2$
is between $N-C_\Theta \log(2N)/(2N)$
and $N+C_\Theta \log(2N)/(2N)$.
Then $\cp_3$  and $\cp_2$ differ by at most $C_\Theta\log(2N)/(2N)$
points, so that
$$D^\pr(\cp_3,R)<2C_\Theta \log(2N)/(2N)
=C_\Theta \log(2N)/N,$$
and we may take $C=2C_\Theta$.
\end{proof}

Because $D^\pr$ is invariant under linear
mappings, we may then map the points $\cp_3$
of $R$ linearly onto any triangle we desire
to sample, and attain the same discrepancy.
We note that \cite{Chen2007662} analyze their
procedure in a different way from how they
define it. To simplify notation, they scale the unit square up
to $U_N=[0,\sqrt{N}]^2$ and then rotate $U_N$
through an angle of $-\alpha$, and bound the discrepancy
of the corresponding scaled and rotated polygons. The
resulting discrepancy bounds apply either way.

Our lattice algorithm runs as follows.
Given a target sample size~$N$,
an angle $\alpha$ such as $3\pi/8$ satisfying Lemma~\ref{lem:qirok},
and a target triangle $\Delta(A,B,C)$,
\begin{compactenum}[\quad1)]
\item $n\gets\lceil\sqrt{2N}\rceil+1$
\item $\cp\gets \{-n,-n+1,\dots,n-1,n\}^2$
\item For each $\bsx_i$ in $\cp$, 
$\bsx_i\gets \bigl(\begin{smallmatrix}\cos(\alpha)&-\sin(\alpha)
\\ \sin(\alpha) & \phantom{-}\cos(\alpha)\end{smallmatrix}\bigr)
{\bsx_i}/{\sqrt{2N}}$
\item Remove points from $\cp$ that are not in $R$
\item (Optional) add points to or remove points from $\cp$ to get $N$ points in $R$
\item Linearly map $\cp$ from $R$ to $\Delta(A,B,C)$:
$A + (C-A)x_{i1} + (B-A)x_{i2}$.
\end{compactenum}

Steps $1$ and $2$ generate a subset of $\ints^2$
containing all the points that might possibly
end up in $R$ after rotation. Step $3$ does the rotation.
Step 4 retains those rotated points that lie in $R$.
Step 5 is optional;
in applications it may not be important to get precisely
$N$ points in $\Delta(A,B,C)$. For $\bsx_i = (x_{i1},x_{i2})^\tran$,
Step 6 maps $(0,0)^\tran$ onto $A$, $(0,1)^\tran$ onto $B$
and $(1,0)^\tran$ onto $C$.

Figure~\ref{fig:rotgrid} shows some points contructed
this way. Two of the examples use angles with badly approximable
tangents and the other two do not.  Those latter ones
leave some relatively large trapezoids nearly empty.

\begin{figure}
\center
\includegraphics[width=\hsize]{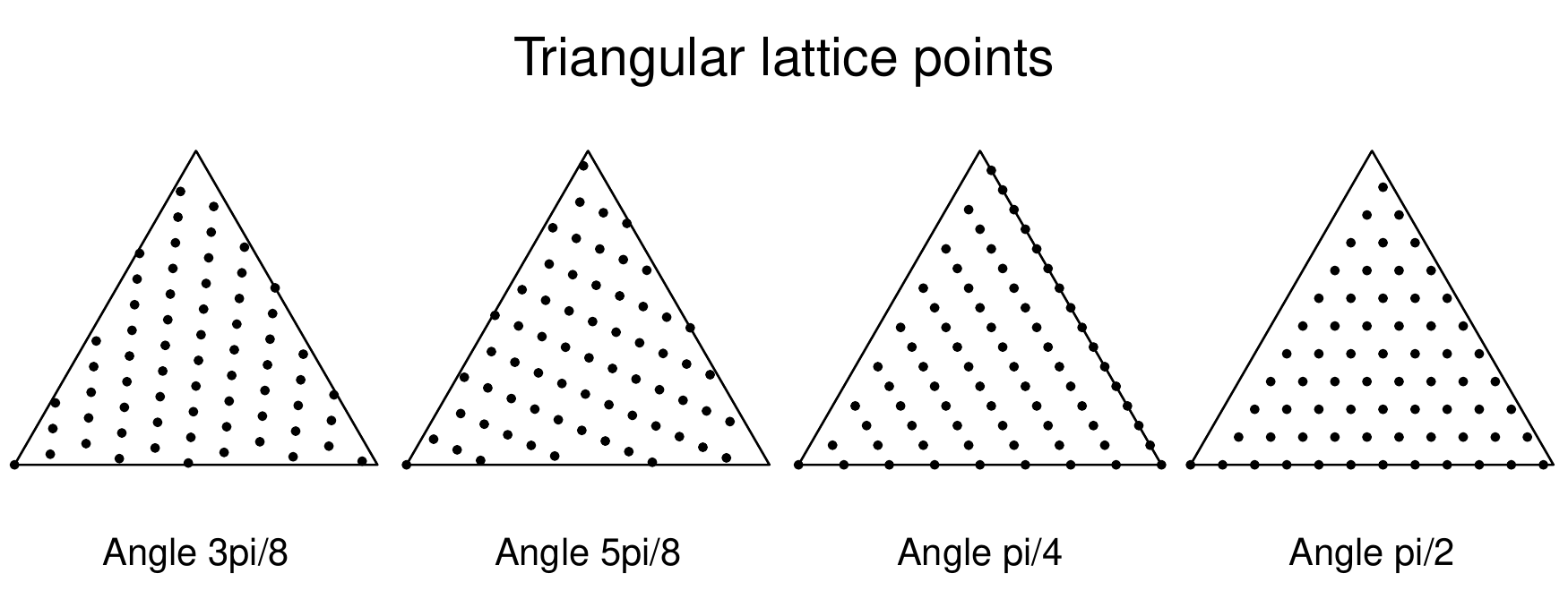}
\caption{\label{fig:rotgrid}
Triangular lattice points for target $N=64$.
Domain is an equilateral triangle.
Angles $3\pi/8$ and $5\pi/8$ have badly approximable
tangents.  Angles $\pi/4$ and $\pi/2$ have integer
and infinite tangents respectively and do not satisfy
the conditions for discrepancy $O(\log(N)/N)$.
}
\end{figure}

Figure~\ref{fig:discfig3piby8} plots the 
parallelogram discrepancy for angle $\alpha = 3\pi/8$.
We see that already for $N$ in the range $10$ to $500$
the discrepancy runs roughly parallel to the asymptotic
bound $O(\log(N)/N)$. Results from \cite{beck:chen:1987}
(cited in \cite{Chen2007662})
show that $D^\pr_N(\cp)$ cannot be $o(\log(N)/N)$.

\begin{figure}[t]
\center
\includegraphics[width=.8\hsize]{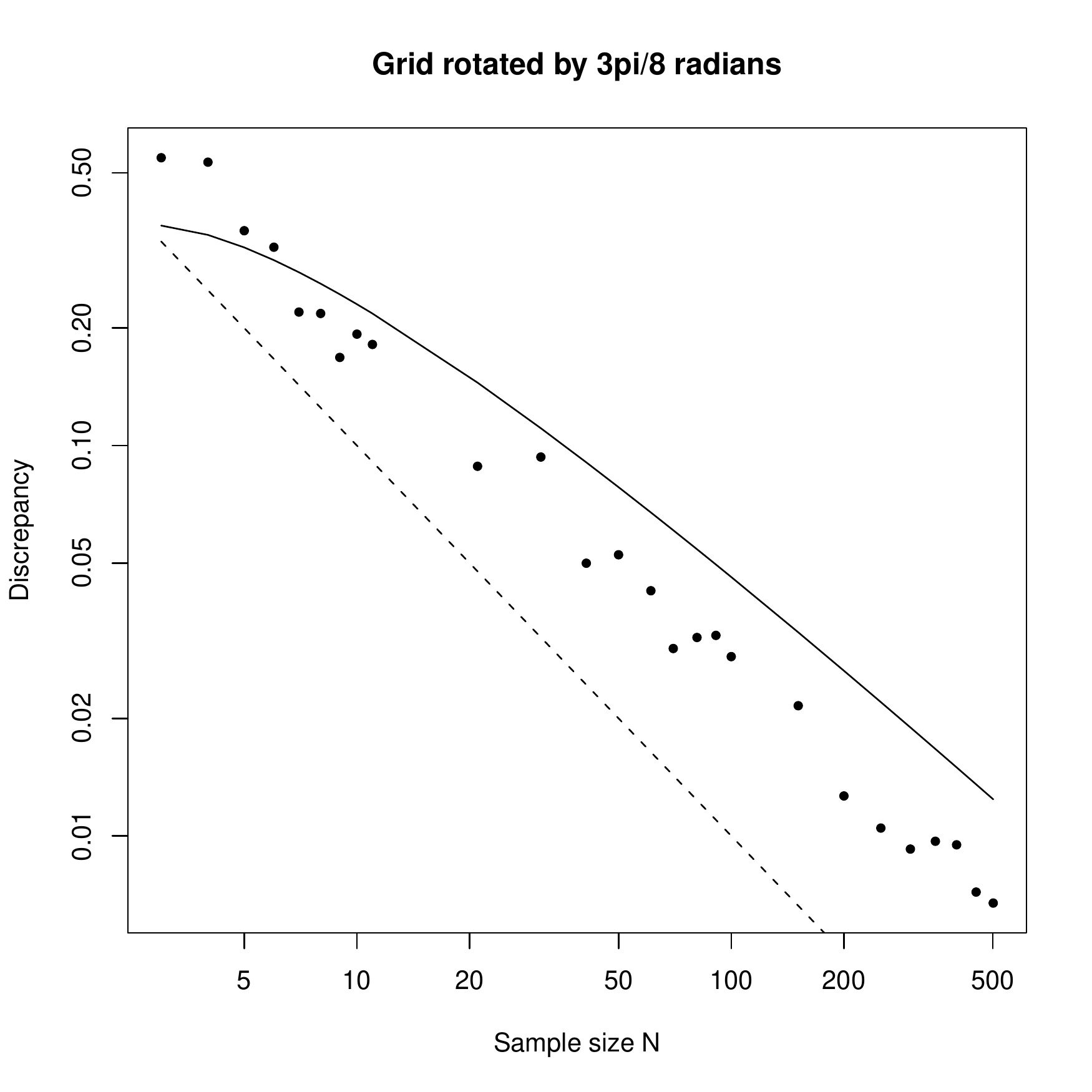}
\caption{\label{fig:discfig3piby8}
Parallel discrepancy of triangular lattice points
for angle $\alpha = 3\pi/8$ and various targets $N$.
The number of points was always $N$ or $N+1$.
The dashed reference line is $1/N$. The solid line
is $\log(N)/N$.
}
\end{figure}

We may want to randomly shift the points.
This can be done by adding a vector
$\bsU\sim\dustd(-1/2,1/2)^2$ to each point in
$\cp$ at step 2.
There are two benefits to randomly shifting
the points. First, with probability one there
will be no point rotated exactly on the boundary of
$R$, and we can then use simple averaging instead
of dividing the weighted sum~\eqref{eq:weightedf}
by $N$. The second advantage is that
can use independent repetitions of this randomization
to estimate error.

\section{Riemann integrable functions}\label{sec:riem}
The usual definition of Riemann Integral of a bounded function on a set in $\mathbb{R}^2$ can be found many books on, such as \cite{ash:dole:2000}
or \cite{mars:1974}. Here we develop an analogue for the triangle.

Let $T$ be nondegenerate closed triangle in $\real^2$.
For $k\ge0$ and $N=4^k$, let $T_{k,1},\dots,T_{k,N}$ be the partition of $T$
into $N$ congruent triangles, similar to $T$.
Let $f$ be a bounded function on $T$.
We say that $f$ is Riemann integrable on $T$ if
$$
\lim_{k\to\infty} \frac1{4^k}\sum_{i=1}^{4^k} f(\bsx_{k,i}) = \mu\in\real
$$
exists for any choices of $\bsx_{k,i}\in T_{k,i}$.
Then we take $\int_Tf(\bsx)\rd\bsx = \mu\times\vol(T)$.

For $k\ge1$ and $i=1,\dots,4^k$, let
\[\begin{split}
m_{k,i} = \inf\{f(\bsx) \mid \bsx \in T_{k,i}\}\quad\text{and}\quad
M_{k,i} = \sup\{f(\bsx) \mid \bsx \in T_{k,i}\}. 
\end{split}
\]
Then $f$ is Riemann integrable if and only if
$
\lim_{k\to\infty} \sum_{i=1}^{4^k} (M_{k,i}-m_{k,i})/4^k=0.
$
By modifying an argument in \cite[Theorem 1.6.6]{ash:dole:2000},
$f$ is Riemann integrable if it is bounded
and continuous almost everywhere on $T$.
When the Riemann integral exists, it matches the Lebesgue
integral.

\begin{figure}
\center
\includegraphics[width=\hsize]{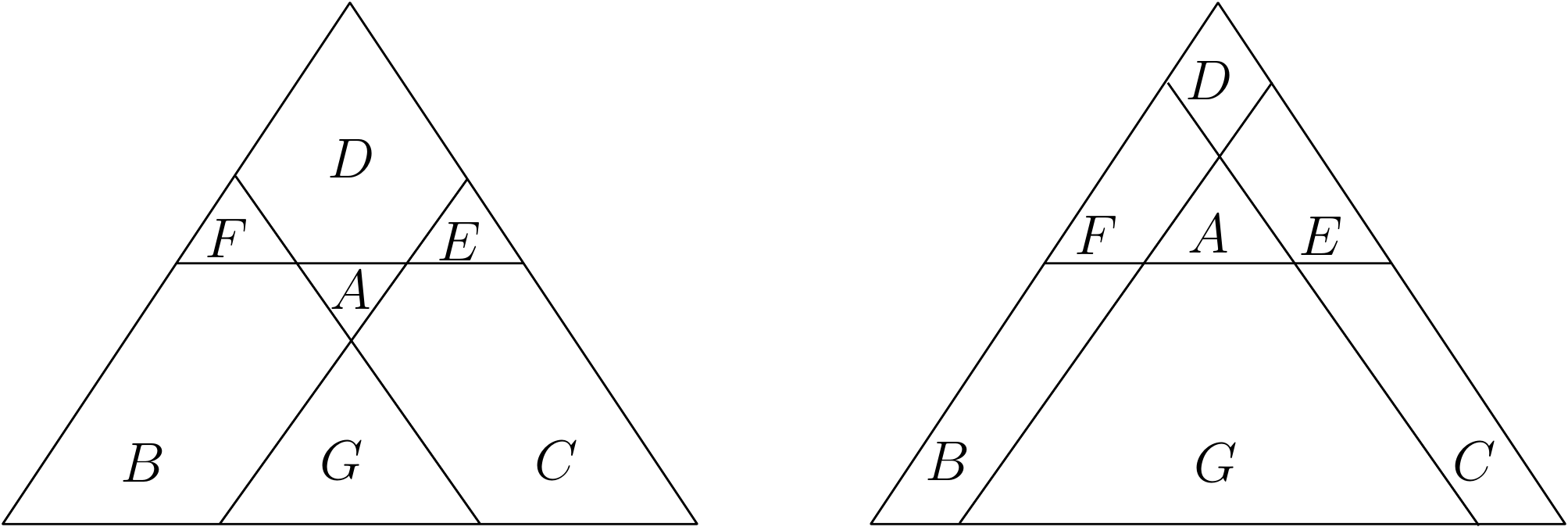}
\caption{\label{fig:tridisc}
Figure to illustrate decomposition of signed
discrepancies of upright (respectively inverted)
subtriangle $A$ in terms of parallelograms.
}
\end{figure}

\begin{lemma}\label{lem:tridisc}
Let $T$ be a triangle and $\cp$ a list of $N\ge1$ points
in $T$, having parallel discrepancy $D^\pr_N(\cp;T)$. 
Let $S$ be a subtriangle of $T$ with sides parallel
to those of $T$. Then $D_N(S;\cp) \le 6D^\pr_N(\cp;T)$.
\end{lemma}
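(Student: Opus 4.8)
The plan is to express the indicator of a subtriangle $S$ (with sides parallel to those of $T$) as a signed combination of a bounded number of parallelograms from the classes $\cs_A$, $\cs_B$, $\cs_C$, and then invoke the additivity of the signed discrepancy from~\eqref{eq:additive} together with the definition of $D^\pr_N$. Since $D_N(S;\cp)$ depends only on $\delta_N(S\cap T;\cp,T)$, it suffices to write $1_{S} = \sum_j \epsilon_j 1_{P_j}$ (as functions on $T$, up to sets of measure zero and with the right count behavior) where each $P_j$ is of the form $\ct_{a,b,C}$, $\ct_{a,c,B}$, or $\ct_{b,c,A}$ intersected with $T$ and $\epsilon_j\in\{+1,-1\}$; then $|\delta_N(S;\cp)| \le \sum_j |\delta_N(P_j;\cp)| \le (\#\text{terms})\cdot D^\pr_N(\cp;T)$, and I must check that the number of terms is at most $6$.

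First I would normalize to $T = R = \Delta((0,0)^\tran,(0,1)^\tran,(1,0)^\tran)$, which is legitimate since the parallel discrepancy of both $S$ and $T$ is invariant under the linear map carrying $T$ to $R$ (and the subtriangle condition is preserved). In these coordinates the subtriangles with sides parallel to $T$ come in two types: ``upright'' ones of the form $\{x\ge \alpha_1,\ y\ge \alpha_2,\ x+y\le \beta\}$ and ``inverted'' ones of the form $\{x\le \alpha_1,\ y\le \alpha_2,\ x+y\ge \beta\}$. For the upright type I would write it as a ``corner'' region cut from $R$ by the three lines $x=\alpha_1$, $y=\alpha_2$, $x+y=\beta$; each of the three half-plane slabs $\{x<\alpha_1\}\cap R$, $\{y<\alpha_2\}\cap R$, $\{x+y>\beta\}\cap R$ is itself a parallelogram (or triangle, a degenerate parallelogram) from one of $\cs_A$, $\cs_B$, $\cs_C$, so by inclusion–exclusion $1_S = 1_R - 1_{H_1} - 1_{H_2} - 1_{H_3} + (\text{pairwise intersections}) - \cdots$; I would then argue that the pairwise-intersection and triple-intersection pieces can themselves be absorbed into single parallelogram classes or cancel, so that the net number of distinct parallelogram discrepancy terms needed is at most $6$ (Figure~\ref{fig:tridisc} is clearly meant to exhibit exactly this decomposition for both the upright and the inverted case). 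The inverted case is handled symmetrically, with the roles of $\le$ and $\ge$ swapped, and should again cost at most $6$ parallelograms. A clean way to organize the bookkeeping: each of the three sides of $S$ lies on a line parallel to a side of $T$, and ``chopping'' $R$ along one such line and keeping the relevant piece costs one parallelogram; chopping along all three sides in sequence and keeping the central piece is what produces $S$, but the intermediate pieces after one or two cuts are still parallelograms, so a telescoping of three cuts expresses $\delta_N(S)$ using at most three parallelograms per cut-order in the worst case, and a careful count gives $6$.

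The main obstacle I expect is the constant: showing the decomposition genuinely needs no more than $6$ parallelogram terms rather than, say, $7$ (naively $1 + 3 + 3$ before cancellation). The resolution should be that the ``$1_R$'' term contributes $\delta_N(R;\cp,R)=0$ automatically, that each doubly-counted corner parallelogram (an intersection of two half-plane slabs) is again a member of one of the $\cs$ classes so it counts as one term not two, and that in fact two of the three original slabs already share a common sub-parallelogram which lets the inclusion–exclusion collapse; carefully tracing Figure~\ref{fig:tridisc} one finds the upright subtriangle is the alternating sum of exactly $6$ parallelograms (three ``large'' ones minus three ``overlap corner'' ones, with the $R$-term and one further overlap cancelling or vanishing). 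I would present the upright computation in full as a displayed chain of set identities, note that $\delta_N$ of each is bounded by $D^\pr_N(\cp;T)$, conclude $D_N(S;\cp)\le 6 D^\pr_N(\cp;T)$, and remark that the inverted case is identical by the evident symmetry of $R$.
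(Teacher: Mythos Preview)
Your overall strategy---write $\delta_N(S)$ as a signed sum of at most six terms $\delta_N(P_j)$ with each $P_j\in\cs_\pr$, then bound each by $D_N^\pr$---is exactly what the paper does. For the \emph{upright} subtriangle your inclusion--exclusion actually works cleanly: in the coordinates of $R$, the three slabs $H_1=\{x<\alpha_1\}\cap R$, $H_2=\{y<\alpha_2\}\cap R$, $H_3=\{x+y>\beta\}\cap R$ and the three pairwise intersections $H_i\cap H_j$ are all of the form $\ct_{a,b,V}\cap R$ for some vertex $V$, the triple intersection is empty (since $\alpha_1+\alpha_2\le\beta$), and $\delta_N(R)=0$. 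That gives exactly six parallelogram terms.

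The gap is your treatment of the \emph{inverted} case. The claim that it is ``identical by the evident symmetry of $R$'' is not correct: there is no affine self-map of $R$ taking upright subtriangles to inverted ones while preserving the parallelogram classes $\cs_A,\cs_B,\cs_C$. If you flip the inequalities and run inclusion--exclusion on the complements $K_1=\{x>\alpha_1\}\cap R$, $K_2=\{y>\alpha_2\}\cap R$, $K_3=\{x+y<\beta\}\cap R$, each $K_i$ is the complement in $R$ of a parallelogram (fine, one term each), but each pairwise intersection $K_i\cap K_j$ is an \emph{upright} subtriangle, not a parallelogram or the complement of one. Your ``sequential chopping'' variant has the same problem: after two cuts the remaining piece minus $S$ is generically a pentagon. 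So the inverted case cannot be dispatched by symmetry or by the same inclusion--exclusion, and your proposal does not currently produce six parallelogram terms there.

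The paper resolves this by giving two genuinely different explicit six-term identities, one per orientation (see Figure~\ref{fig:tridisc}). For the inverted subtriangle $A$ it uses three ``small'' parallelograms $AB,AC,AD$ (each the union of $A$ with one adjacent region) minus three ``large'' parallelograms $ABDF,ACDE,ACGB$; for the upright subtriangle it uses three large parallelograms plus the three small corner regions $B,C,D$. In each case the seven-region partition of $T$ by the three lines through the sides of $S$, together with $\delta_N(T)=0$, makes the algebra a two-line check. You should replace the symmetry appeal with one of these explicit decompositions for the inverted case.
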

\begin{proof}
First suppose that $S$ is inverted with respect to $T$
shown as $A$ in the left panel of Figure~\ref{fig:tridisc}.
Let the subsets $A$, $B$, $\dots$, $F$ indicated there
be disjoint, and have union $T$.
Let $AB$, $ABDF$ et cetera be unions of those sets.
Then with the signed discrepancy function 
$\delta(\cdot)=\delta_N(\cdot;\cp,T)$,
and these six sets in $\cs_\pr$,
\begin{align*}
&\phantom{=}\ \delta(AB)+\delta(AC)+\delta(AD)
-\delta(ABDF)-\delta(ACDE)-\delta(ACGB)\\
&=-\delta(B)-\delta(C)-\delta(D)-\delta(E)-\delta(F)-\delta(G)\\
&=\delta(A),
\end{align*}
using additivity of signed discrepancy, and
$\delta(ABCDEFG)=\delta(T)=0$. 
Next let $S$ be the upright triangle shown as $A$ in the
right panel of Figure~\ref{fig:tridisc}.
Then using $6$ sets from $\cs_\pr$,
\begin{align*}
&\phantom{=}\
 \delta(ABFG)+\delta(ADEF)+\delta(ACEG)
+\delta(B)+\delta(C)+\delta(D)\\
&=3\delta(A)+2\delta(B)+2\delta(C)+2\delta(D)+2\delta(E)+2\delta(F)+2\delta(G)\\
&=\delta(A).
\end{align*}
In either case, $|\delta(A)|\le 6D^\pr_N(\cp;T)$, and so $D_N(S;\cp)\le6D_N^\pr(\cp;T)$.
\end{proof}

\begin{theorem}
Let $f$ be a Riemann integrable function on 
a nondegenerate triangle $\Omega$, and 
let $\cp_N = (\bsx_{N,1},\bsx_{N,2},\dots,\bsx_{N,N})$
for $\bsx_{N,i}\in\Omega$.
If $\lim_{N\to\infty} D^\pr_N(\cp;\Omega)=0$, then
\[
\lim_{N\to\infty}
\frac{\vol(\Omega)}{N} \sum_{i=1}^{N} f(\bsx_{N,i}) = \int_{\Omega}f(\bsx)\rd\bsx.
\]
\end{theorem}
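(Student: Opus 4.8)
The plan is to show that the QMC estimator converges for Riemann integrable $f$ by squeezing it between lower and upper Darboux-type sums over the dyadic triangular partition, exactly as one does for the classical Koksma–Hlawka-free convergence argument on the cube (\cite{nied92}). Fix $\varepsilon>0$. Since $f$ is Riemann integrable on $\Omega$, choose $k$ large enough that $\sum_{i=1}^{4^k}(M_{k,i}-m_{k,i})/4^k<\varepsilon$, where $T_{k,1},\dots,T_{k,M}$ (with $M=4^k$) is the congruent subtriangulation of $\Omega$ and $m_{k,i},M_{k,i}$ are the infimum and supremum of $f$ on $T_{k,i}$. Hold this $k$ (hence $M$) fixed for the rest of the argument and let $N\to\infty$.

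\emph{Step 1: sandwich the estimator by step functions.} Define $\underline f=\sum_{i=1}^M m_{k,i}\,1_{T_{k,i}}$ and $\overline f=\sum_{i=1}^M M_{k,i}\,1_{T_{k,i}}$ (splitting the shared boundaries arbitrarily, which affects nothing since they have measure zero and, after a random shift or by perturbing the partition slightly, contain no sample point). Then $\underline f\le f\le\overline f$ pointwise, so
\[
\frac{\vol(\Omega)}{N}\sum_{i=1}^N\underline f(\bsx_{N,i})
\le \frac{\vol(\Omega)}{N}\sum_{i=1}^N f(\bsx_{N,i})
\le \frac{\vol(\Omega)}{N}\sum_{i=1}^N\overline f(\bsx_{N,i}),
\]
and correspondingly $\int_\Omega\underline f\le\int_\Omega f\le\int_\Omega\overline f$, with the outer pair differing by less than $\varepsilon\vol(\Omega)$ by the choice of $k$.

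\emph{Step 2: control the step-function averages via the subtriangle discrepancy.} For the step function $\overline f$, the quadrature error is
\[
\Bigl|\frac{\vol(\Omega)}{N}\sum_{i=1}^N\overline f(\bsx_{N,i})-\int_\Omega\overline f\,\Bigr|
=\vol(\Omega)\,\Bigl|\sum_{i=1}^M M_{k,i}\,\delta_N(T_{k,i};\cp_N,\Omega)\Bigr|
\le \vol(\Omega)\,M\,\bigl(\max_i|M_{k,i}|\bigr)\,\max_i D_N(T_{k,i};\cp_N).
\]
Now invoke Lemma~\ref{lem:tridisc}: each $T_{k,i}$ is a subtriangle of $\Omega$ with sides parallel to those of $\Omega$, so $D_N(T_{k,i};\cp_N)\le 6\,D_N^\pr(\cp_N;\Omega)$. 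Since $k$ (hence $M$ and the bounded quantities $M_{k,i}$) is fixed and $D_N^\pr(\cp_N;\Omega)\to0$ by hypothesis, the right-hand side tends to $0$ as $N\to\infty$; the identical bound holds for $\underline f$. Hence for all large $N$ both step-function averages are within $\varepsilon$ of their integrals.

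\emph{Step 3: combine.} Chaining the inequalities, for all large $N$,
\[
\frac{\vol(\Omega)}{N}\sum_{i=1}^N f(\bsx_{N,i})
\ \ge\ \int_\Omega\underline f-\varepsilon
\ \ge\ \int_\Omega f-\varepsilon\vol(\Omega)-\varepsilon,
\]
and symmetrically the estimator is at most $\int_\Omega f+\varepsilon\vol(\Omega)+\varepsilon$. Since $\varepsilon>0$ was arbitrary, $\lim_{N\to\infty}\frac{\vol(\Omega)}{N}\sum_{i=1}^N f(\bsx_{N,i})=\int_\Omega f(\bsx)\rd\bsx$.

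\emph{Main obstacle.} The only delicate point is the boundary-measure issue: $\delta_N$ is defined with $A_N$ counting points in a set, and the $T_{k,i}$ tile $\Omega$ with overlapping closed boundaries, so a sample point landing on a shared edge could be miscounted when one passes from "step function dominating $f$" to "sum of signed discrepancies of the pieces." This is handled by noting the edges form a Lebesgue-null set and assigning each such point to just one tile (equivalently, working with half-open subtriangles, or appealing to the random-shift remark in Section~\ref{sec:trikron} that puts no point on a boundary with probability one); none of this affects $\int_\Omega\underline f$ or $\int_\Omega\overline f$. A secondary but routine point is that $M=4^k$ must be frozen before letting $N\to\infty$, so that the factor $M$ multiplying $D_N^\pr\to0$ is a constant — the order of limits ($k$ first via Riemann integrability, then $N$) is what makes the argument go through.
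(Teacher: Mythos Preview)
Your proof is correct and follows essentially the same route as the paper's: fix $k$ via Riemann integrability, sandwich $f$ between its infimum and supremum on each of the $4^k$ congruent subtriangles, express the resulting error in terms of the signed discrepancies $\delta_N(T_{k,i};\cp_N)$, and then invoke Lemma~\ref{lem:tridisc} to bound each of these by $6D_N^\pr(\cp_N;\Omega)\to0$ with $k$ held fixed. Your explicit treatment of the shared-boundary issue is a point the paper leaves implicit, but otherwise the arguments match in structure and in the constants that appear.
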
 
\begin{proof}
Fix $\epsilon>0$ and then choose $k\ge0$ so that
$4^{-k}\sum_{i=1}^{4^k}(M_{k,i}-m_{k,i})<\epsilon$.
Let $T_1,\dots,T_{4^k}$ be the $4^k$ congruent 
subtriangles of $\Omega$.
Then
\begin{align*}
\frac{\vol(\Omega)}N
\sum_{i=1}^Nf(\bsx_{N,i})
&=\sum_{j=1}^{4^k}
\frac{\vol(\Omega)}N
\sum_{i=1}^N1_{\bsx_{N,i}\in T_j}f(\bsx_{N,i})\\
&\le\sum_{j=1}^{4^k}\frac{\vol(\Omega)}N
\sum_{i=1}^N1_{\bsx_{N,i}\in T_j}M_{k,j}\\
&=\vol(\Omega)\sum_{j=1}^{4^k}
\bigl(4^{-k} + \delta_N(T_j;\cp_N)\bigr)M_{k,j}.
\end{align*}
From Lemma~\ref{lem:tridisc}, $|\delta_N(T_j;\cp_N)|\le 6D_N^\pr(\cp_N)$.
Therefore
\begin{align*}
\frac{\vol(\Omega)}N
\sum_{i=1}^Nf(\bsx_{N,i}) \le \int_\Omega f(\bsx)\rd\bsx
+\epsilon\vol(\Omega) + 6\times4^kD_N^\pr(\cp_N)\vol(\Omega)|M_{0,1}|.
\end{align*}
Similarly,
$$
\frac{\vol(\Omega)}N
\sum_{i=1}^Nf(\bsx_{N,i}) \ge \int_\Omega f(\bsx)\rd\bsx
-\epsilon\vol(\Omega) - 6\times4^kD_N^\pr(\cp_N)\vol(\Omega)|m_{0,1}|.
$$
To complete the proof, let $N\to\infty$ and then note
that $\epsilon$ was arbitrary.
\end{proof}

\section{Discussion}\label{sec:disc}
The Kronecker construction attains a lower discrepancy
than the van der Corput construction.
But the van der Corput construction is extensible and
the digits in it can be randomized. If the integrand $f$
is continuously differentiable, then for $N=4^k$, the randomization
in \cite{rtms} will produce integral estimates with a root
mean squared error $O(N^{-1})$, slightly better than
the Koksma-Hlawka bound for the deterministic Kronecker construction.
As a result, we anticipate that both constructions will
be useful in applications. The variation measure
used by \cite{bran:colz:giga:trav:2013} requires even more
smoothness than one derivative and so 

Spherical triangles are also of interest.
The digital construction can be used
to generate points inside
a proper spherical triangle (all angles less than $\pi$)
with corners $A$, $B$ and $C$
if averages like $(A+B)/2$ are projected back to the
surface of the sphere, to get the midpoint of the arc
from $A$ to $B$.

\section*{Acknowledgments}
This work was supported by the U.S.\ National Science
Foundation under grant DMS-0906056.

\bibliographystyle{apalike}
\bibliography{qmc}
\end{document}